\definecolor{cadmiumgreen}{rgb}{0.0, 0.42, 0.24}
\DeclareMathOperator\Error{Err}
\DeclareMathOperator\ME{ME}
\newcommand\diff{e}
\newcommand\minmax{\mathfrak{I}}
\DeclareMathOperator\spset{spset}
\def\supp{{\rm{supp}}}
\def\conv{{\rm{conv}}} 
\def\NN{{\mathbb N}}
\def\ZZ{{\mathbb Z}}
\def\RR{{\mathbb R}}
\def\CC{{\mathbb C}}
\def\UU{{\mathbf U}}
\def\B{{\mathcal B}}
\def\A{{\mathcal A}}
\def\opn#1#2{\def#1{\operatorname{#2}}} 
\opn\depth{depth} 
\opn\codim{codim}
\opn\ini{in} 
\opn\LM{LM}
\opn\LC{LC}
\opn\NF{NF}
\opn\Merge{Merge}
\opn\sgn{sgn}
\opn\div{div} 
\opn\Div{Div} 
\opn\Pic{Pic}
\opn\Jac{Jac}
\opn\Prin{Prin}
\opn\Del{Del}
\opn\op{op}
\opn\ends{ends}
\opn\indeg{indeg} 
\opn\outdeg{outdeg}
\opn\red{red}
\opn\Spec{Spec} 
\opn\Supp{Supp} 
\opn\Meas{Meas}
\opn\supp{supp} 
\opn\Ker{Ker} 
\opn\Coker{Coker} 
\opn\sign{sign}
\opn\Hom{Hom}
\opn\Tor{Tor} 
\opn\id{id}
\opn\cl{cl}
\opn\trace{trace}
\opn\Image{Image}
\opn\con{conv} 
\opn\relint{rel.int} 
\opn\vol{vol}
\opn\val{val}
\opn\Ber{Ber}
\opn\can{can}
\opn\syz{{\rm syz}}
\opn\spoly{{\rm spoly}}
\opn\LM{{\rm LM}}
\opn\lm{{\rm lm}}
\opn\lcm{{\rm lcm}} 
\opn\A{\mathcal A}
\opn\L{\mathrm{L}}
\opn\SB{\mathbb{SB}}
\opn\dist{dist}
\opn\size{size}
\opn\mult{mult}
\opn\pd{pd}
\opn\en{en}
\opn\PL{\mathrm{PL}}
\opn\H{\mathrm{H}}
\opn\V{\mathrm{V}}
\opn\B{\mathcal{B}}
\opn\Tr{\mathrm{Tr}}
\opn\Gr{\mathbf{Gr}}
\def\Implies{\ifmmode\Longrightarrow \else
        \unskip${}\Longrightarrow{}$\ignorespaces\fi}
\def\implies{\ifmmode\Rightarrow \else
        \unskip${}\Rightarrow{}$\ignorespaces\fi}
\def\iff{\ifmmode\Longleftrightarrow \else
        \unskip${}\Longleftrightarrow{}$\ignorespaces\fi}
\newtheorem{Theorem}{Theorem}[section]
\newtheorem{Lemma}[Theorem]{Lemma}
\newtheorem{Proposition}[Theorem]{Proposition}
\theoremstyle{remark}
\newtheorem{Remark}[Theorem]{Remark}
\theoremstyle{definition}
\newtheorem{Definition}[Theorem]{Definition}
\tikzstyle{Cwhite}=[scale = .8,circle, fill = white, minimum size=3mm] 
\tikzstyle{Cgray}=[scale = .4,circle, fill = gray, minimum size=3mm] 
\tikzstyle{Cblack2}=[scale = .4,circle, fill = black, minimum size=5mm] 
\tikzstyle{Cblack}=[scale = .7,circle, fill = black, minimum size=3mm]
\tikzstyle{C0}=[scale = .9,circle, fill = black!0, inner sep = 0pt, minimum size=3mm]
\tikzstyle{C1}=[scale = .7,circle, fill = black!0, inner sep = 0pt, minimum size=3mm]
\tikzstyle{Cred}=[scale = .4,circle, fill = red, minimum size=3mm]
\begin{document}

\title{Effective divisor classes on metric graphs}

\author{Andreas Gross}
\address{Imperial College London\\
Department of Mathematics\\
South Kensington Campus \\
London, SW7 2AZ \\
UK\\
}
\email{\href{mailto:a.gross@imperial.ac.uk}{a.gross@imperial.ac.uk}}

\author {Farbod Shokrieh}
\address{Cornell University\\
Ithaca, New York 14853-4201\\
USA}
\email{\href{mailto:farbod@math.cornell.edu}{farbod@math.cornell.edu}}

\author{Lilla T\'othm\'er\'esz}
\address{Cornell University\\
Ithaca, New York 14853-4201\\
USA\\
MTA-ELTE Egerv\'ary Research Group, P\'azm\'any P\'eter s\'et\'any 1/C, Budapest, Hungary}
\email{\href{mailto:tmlilla@math.cornell.edu}{tmlilla@math.cornell.edu}}

\subjclass[2010]{\href{https://mathscinet.ams.org/msc/msc2010.html?t=14T05}{14T05}, \href{https://mathscinet.ams.org/msc/msc2010.html?t=05C25}{05C25}}

\date{June 29, 2018}

\begin{abstract}
We introduce the notion of semibreak divisors on metric graphs (tropical curves) and prove that every effective divisor class (of degree at most the genus) has a semibreak divisor representative. This appropriately generalizes the notion of break divisors (in degree equal to genus). Our method of proof is new, even for the special case of break divisors. We provide an algorithm to efficiently compute such semibreak representatives.
Semibreak divisors provide the tool to establish some basic properties of effective loci inside Picard groups of metric graphs. We prove that effective loci are pure-dimensional polyhedral sets. We also prove that a `generic' divisor class (in degree at most the genus) has rank zero, and that the Abel-Jacobi map is `birational' onto its image. These are analogues of classical results for Riemann surfaces.
\end{abstract}

\maketitle

\setcounter{tocdepth}{1}
\tableofcontents

\section{Introduction}
Metric graphs, in many respects, are tropical (or non-Archimedean) analogues of Riemann surfaces. For example, there is a well-behaved theory of divisors and Jacobians for metric graphs (see e.g. \cite{BN07, MZ, GK, ABKS}). There is also an interesting interaction between the theories of divisors on metric graphs and on algebraic curves, with numerous applications in algebraic geometry (see e.g. \cite{baker, CDPR, JP1, JP2, CJP, BR, BPR, BJ}). The purpose of this work is to study {\em tropical effective loci} and establish some of their basic properties.

Let $\Gamma$ be a compact {\em metric graph} ({\em abstract tropical curve}) of genus $g$. Fix an integer $0 \leq d \leq g$. There is a canonical (Abel-Jacobi) map $S^{(d)} \colon \Div_+^d(\Gamma) \rightarrow \Pic^d(\Gamma)$ taking an effective divisor $D$ of degree $d$ on $\Gamma$ to its linear equivalence class $[D]$. The image of this map, denoted by $W_d$, is the {\em locus of effective divisor classes}. In the language of chip-firing games on metric graphs, this is the collection of chip configuration classes (up to chip-firing moves) which are `winnable'. 

We provide `nice' representatives for equivalence classes $[D] \in W_d$. In the case $d=g$, this is done by Mikhalkin and Zharkov in \cite{MZ} using the theory of tropical theta functions. They introduce the notion of `break divisors', and prove that every $[D] \in W_g = \Pic^g(\Gamma)$ has a unique break divisor representative. The notion of break divisors is further studied in \cite{ABKS} from a more combinatorial point of view related to orientations on graphs. 

A break divisor can be described as follows: pick $g$ disjoint open edge segments in $\Gamma$ so that, if we remove them from $\Gamma$, the remaining space becomes contractible (see the gray edges in Figure~\ref{fig:semibreak}). A break divisor is a divisor obtained by picking one point from {\em the closure} of each of these open edge segments (see Figure~\ref{fig:semibreak} (a), (b)). So a break divisor has degree equal to $g$ by construction. 

We define a {\em semibreak divisor} to be a divisor obtained from a break divisor after removing some points in its support. More precisely, a semibreak divisor is an effective divisor `dominated' by a break divisor (see Figure~\ref{fig:semibreak} (c), (d)). In particular, a break divisor is a semibreak divisor in degree $g$.

\vspace{-2mm}
\begin{figure}[h!]
$$
\begin{xy}
(0,0)*+{
	\scalebox{.9}{$
	\begin{tikzpicture}
	\draw[black, gray] (0,1.2) to [out=-45,in=90] (.6,0);
	\draw[black, ultra thick] (.6,0) to [out=-90,in=45] (0,-1.2);
	\draw[black, ultra thick] (0,1.2) to [out=-135,in=90] (-.6,0);
	\draw[black, gray] (-.6,0) to [out=-90,in=135] (0,-1.2);
	\draw[black, gray] (0,1.2) -- (0,0);
	\draw[black, gray] (0,0.1) -- (0,-1.2);
	\draw[black, ultra thick] (.6,0) to [out=+30,in=0] (0,1.8);
	\draw[black, ultra thick] (0,1.8) to [out=180,in=150] (-.6,0);
	\fill[black] (0,.5) circle (.1);
	\fill[black] (.5,.5) circle (.1);
	\fill[black] (-.5,-.5) circle (.1);
	\end{tikzpicture}
	$}
};
(0,-20)*+{(a)};
\end{xy}
\ \ \ \ \ \ \ \ \ \ 
\begin{xy}
(0,0)*+{
	\scalebox{.9}{$
	\begin{tikzpicture}
	\draw[black, gray] (0,1.2) to [out=-45,in=90] (.6,0);
	\draw[black, ultra thick] (.6,0) to [out=-90,in=45] (0,-1.2);
	\draw[black, ultra thick] (0,1.2) to [out=-135,in=90] (-.6,0);
	\draw[black, gray] (-.6,0) to [out=-90,in=135] (0,-1.2);
	\draw[black, ultra thick] (0,1.2) -- (0,0);
	\draw[black, ultra thick] (0,0.1) -- (0,-1.2);
	\draw[black, gray] (.6,0) to [out=+30,in=0] (0,1.8);
	\draw[black, gray] (0,1.8) to [out=180,in=150] (-.6,0);
	\fill[black] (.6,0) circle (.1);
	\fill[black] (-.5,-.5) circle (.1);
	\end{tikzpicture}
	$}
};
(8,-5)*+{\mbox{{\smaller $2$}}};
(0,-20)*+{(b)};
\end{xy}
\ \ \ \ \ \ \ \ \ \ 
\begin{xy}
(0,0)*+{
	\scalebox{.9}{$
	\begin{tikzpicture}
	\draw[black, gray] (0,1.2) to [out=-45,in=90] (.6,0);
	\draw[black, ultra thick] (.6,0) to [out=-90,in=45] (0,-1.2);
	\draw[black, ultra thick] (0,1.2) to [out=-135,in=90] (-.6,0);
	\draw[black, gray] (-.6,0) to [out=-90,in=135] (0,-1.2);
	\draw[black, gray] (0,1.2) -- (0,0);
	\draw[black, gray] (0,0.1) -- (0,-1.2);
	\draw[black, ultra thick] (.6,0) to [out=+30,in=0] (0,1.8);
	\draw[black, ultra thick] (0,1.8) to [out=180,in=150] (-.6,0);
	\fill[black] (.5,.5) circle (.1);
	\fill[black] (-.5,-.5) circle (.1);
	\end{tikzpicture}
	$}
};
(0,-20)*+{(c)};
\end{xy}
\ \ \ \ \ \ \ \ \ \ 
\begin{xy}
(0,0)*+{
	\scalebox{.9}{$
	\begin{tikzpicture}
	\draw[black, gray] (0,1.2) to [out=-45,in=90] (.6,0);
	\draw[black, ultra thick] (.6,0) to [out=-90,in=45] (0,-1.2);
	\draw[black, ultra thick] (0,1.2) to [out=-135,in=90] (-.6,0);
	\draw[black, gray] (-.6,0) to [out=-90,in=135] (0,-1.2);
	\draw[black, ultra thick] (0,1.2) -- (0,0);
	\draw[black, ultra thick] (0,0.1) -- (0,-1.2);
	\draw[black, gray] (.6,0) to [out=+30,in=0] (0,1.8);
	\draw[black, gray] (0,1.8) to [out=180,in=150] (-.6,0);
	\fill[black] (.6,0) circle (.1);
	\end{tikzpicture}
	$}
};
(0,-20)*+{(d)};
\end{xy}
\ \ \ \ \ \ \ \ \ \ 
\!\!\!\!\!\!\!\!\!\!\!\!\!\!\!\!\!\!\!
$$
\captionsetup{singlelinecheck=off}
\caption[]{Semibreak divisors on a metric graph of genus $3$.
\begin{itemize}
\item[(a)] a `generic' break divisors.
\item[(b)] a break divisor with some endpoints of open edge segments.
\item[(c)] a semibreak divisor in degree $2$ dominated by the break divisor in (a).
\item[(d)] a semibreak divisor in degree $1$ dominated by the break divisor in (b).
\end{itemize}
}
\label{fig:semibreak}
\end{figure}
\vspace{-2mm}
Most of our work is devoted to proving the following result.

\vspace{3mm}
\noindent{\bf Theorem A.} 
There exists a semibreak divisor in each $[D]\in W_d$.

\vspace{1mm}
See Theorem~\ref{thm:repr_with_below_break}.
\vspace{2mm}

Our techniques are new, even in the special case that $d=g$. We rely mostly on the geometry (e.g. a notion of `convexity') and algebraic topology (e.g. various Mayer-Vietoris sequences) of $\Gamma$ and its subspaces.
We avoid the use of tropical theta functions, orientations, semimodels, reduced divisors, etc. 

We start by a key result which states, roughly, that a given divisor is a break divisor if and only if a certain inequality holds for every `admissible' subset of $\Gamma$ (see Proposition~\ref{prop:break_char} for a precise statement). 
By a Mayer-Vietoris argument, this characterization relates break divisors to submodular functions (see \S\ref{sec:submod}). So, in its core, our approach resembles classical combinatorial proofs using submodularity, however, we exploit these ideas directly in a `continuous setting'. We find it remarkable that the `discrete theory' of submodular functions fits so naturally into our tropical setting.

Having this characterization, the naive strategy to prove Theorem A is straightforward: given an effective divisor $D$, we should first find a linearly equivalent divisor $D'$ that satisfies all the desired inequalities. We then would like to add points to $D'$ carefully in a way that all the desired inequalities are preserved. This process should eventually stop and output a break divisor that dominates a semibreak divisor linearly equivalent to $D$. It turns out that the construction of a suitable break divisor is more subtle than by simply adding points to $D'$. The process will rely on understanding certain canonical subsets $\minmax(E) \subsetneq \Gamma$ attached to effective divisors $E$. In a key result (Proposition~\ref{prop:comparing error after firing}) we will describe exactly how various invariants change as points are moved in relation to $\minmax(E)$.

\vspace{2mm}
We then turn our attention to the question of uniqueness.  

\vspace{3mm}
\noindent{\bf Theorem B.} 
There is a unique break divisor in each $[D] \in W_g$. If $d< g$ there can be distinct semibreak divisors in $[D] \in W_d$. If $\Gamma$ is $(d+1)$-edge connected then there is a unique semibreak divisor in each $[D] \in W_d$.

\vspace{1mm}
See Proposition~\ref{prop:uniq}, Figure~\ref{fig:equivalent}, Proposition~\ref{prop:distinctsemibreak} for precise statements. 
\vspace{2mm}

As mentioned earlier, the first statement is already proved in \cite{MZ, ABKS}. Our proof indicates that the result can be thought of as a consequence of a `maximum principle' (Lemma~\ref{lem:maxprin}).

\vspace{2mm}

We will then prove that an `integral' version of Theorem A also holds. A finite unweighted graph $G$ may be thought of as a metric graph whose edges have length 1. We define an integral divisor to be a divisor supported on the vertices of $G$.

\vspace{3mm}
\noindent{\bf Theorem C.} 
Let $[D] \in W_d$ and assume $D$ is integral. Then every semibreak divisor in $[D]$ is also integral.

\vspace{1mm}
See Proposition~\ref{prop:integral} for a precise statement. 
\vspace{2mm}

The fact that there {\em exists} an integral semibreak divisor in $[D]$ is immediate from our method of proof of Theorem A. More work is needed to show that {\em all} semibreak divisors in the equivalence class are indeed integral.  We note that the `existence' part of Theorem C may be stated purely in terms of finite graphs (avoiding metric graphs). We also remark that one could modify our proof to give a purely combinatorial proof of the finite graph version of Theorem C. One could also give another combinatorial proof (of the `existence' part) by using the theory of partial orientations in \cite{backman} and results in \cite{ABKS} (see Remark~\ref{rmk:partialorien}).

\vspace{2mm}

We will show that everything is efficiently computable.

\vspace{3mm}
\noindent{\bf Theorem D.} 
Given $[D] \in W_d$, there is an efficient algorithm that computes a semibreak divisor $D' \in [D]$.

\vspace{1mm}
See Theorem~\ref{thm:compute}, and \S\ref{sec:compute} for the description of the algorithm. 
\vspace{2mm}

We will reduce our computational problem to the theory of {\em submodular optimization} as in \cite{Schrijver_submod, IFF_submod}. For break divisors on finite graphs an algorithm is presented in \cite[\S7]{backman} relating the computation to the max-flow min-cut problem in graph theory.

Finally we apply the theory of semibreak divisors to prove the following tropical (non-Archimedean) analogues of some classical results on Riemann surfaces.

\vspace{3mm}
\noindent{\bf Theorem E.} 
\begin{itemize}
\item[(a)] $W_d$ is a {\em purely} $d$-dimensional polyhedral subset of $\Pic^d(\Gamma)$.
\item[(b)] The tropical (Abel-Jacobi) map $S^{(d)} \colon \Div_+^d(\Gamma) \rightarrow \Pic^d(\Gamma)$ is `birational' onto its image.
\item[(c)] There exists an open dense subset $U_d \subseteq W_d$ such that $r(D) = 0$ whenever $[D]\in U_d$.
\end{itemize}

\vspace{1mm}
See Theorem~\ref{thm:birational} and Theorem~\ref{thm:generic rank} for precise statements. 
\vspace{2mm}

The analogous statements for Riemann surfaces essentially follow from simple linear algebraic facts applied to `Brill--Noether' matrices (see e.g. \cite[p.245]{GriffHarr}). The situation in tropical geometry is different. While the fact that $W_d$ is a $d$-dimensional polyhedral subset of $\Pic^d(\Gamma)$ is elementary and well-known, the \emph{pure}-dimensionality of $W_d$ is much more subtle (see Remark~\ref{rmk:tropvsag1} (i)). We remark that one could give a highly non-constructive proof of pure-dimensionality by appealing to Berkovich's theory of non-Archimedean analytic spaces (\cite{Berkovich}) and combining the results in \cite{BR} and \cite{Gubler} (see Remark~\ref{rmk:tropvsag1} (ii)). Finally, the statement analogous to part (c) for Riemann surfaces (see e.g. \cite[p.245]{GriffHarr}) is usually stated as $r(D) = 0$ for a generic {\em effective divisor} $D$. In algebraic geometry, this is equivalent to saying $r(D) = 0$ for a generic {\em effective divisor class} $[D]$. In tropical geometry these two statements are {\em not} equivalent and, in fact, the former statement is not true (see Remark~\ref{rmk:tropvsag2}).

\medskip

\noindent{\em Structure of the paper.}
In \S\ref{sec:defs} we will review some basic definitions and set our notations and terminology.
In \S\ref{sec:semibreak} the notion of semibreak divisors is introduced. We will also state and prove a key result (Proposition~\ref{prop:break_char}) characterizing break divisors in terms of certain inequalities arising from the topology of `admissible' subsets of the metric graph.
In \S\ref{sec:submod} and \S\ref{sec:err} we study the functions and subsets related to the inequalities in Proposition~\ref{prop:break_char}. 
In \S\ref{sec:main} the existence of semibreak divisors (Theorem A) is proved. We will also discuss the uniqueness issues (Theorem B) and consider the integral version of semibreak divisors (Theorem C).
In \S\ref{sec:compute} we show how one can efficiently compute a semibreak divisor linearly equivalent to a given effective divisor (Theorem D).
In \S\ref{sec:generic} we apply the theory of semibreak divisors to prove basic generic properties of tropical effective loci (Theorem E).

\subsection*{Acknowledgments}
We would like to thank Matt Baker, Tam\'as Kir\'aly, Ye Luo, and Sam Payne for helpful conversations.
AG was supported by the ERC Starting Grant MOTZETA (project 306610) of the European Research Council (PI: Johannes Nicaise). LT was supported by NSF grant DMS-1455272 and by the Hungarian Scientific Research Fund - OTKA, K109240.

\section{Definitions and background} \label{sec:defs}

\subsection{Metric graphs and vertex sets}
\begin{Definition}
A {\em metric graph} (or an {\em abstract tropical curve}) is a compact connected metric space such that every point has a neighborhood isometric to a star-shaped set, endowed with the path metric.
\end{Definition}
By a {\em star-shaped} set of finite {\em valency} $n$ and {\em radius} $r$ we mean a set of the form
\[
S(n, r) = \{ z \in \CC \colon z=te^{k \frac{2\pi i}{n}} \text{for some } 0 \leq t < r \text{ and some } k \in \ZZ \} \, .
\]
In other words, a metric graph is a pair $(\Gamma,d)$ consisting of a compact connected topological graph $\Gamma$, together with an {\em inner metric} $d$. 

\medskip
{\itshape
As all of our results are easy to show for metric circles, we will always assume that $\Gamma$ is neither a point, nor a circle.
}
\medskip

\begin{Definition}
\begin{itemize}
\item[]
\item[(i)] The points of $\Gamma$ that have valency different from $2$ are called \emph{branch points} of $\Gamma$. 
\item[(ii)]A {\em vertex set} for $\Gamma$ is a finite set of points of $\Gamma$ containing all the branch points. 
\end{itemize}
\end{Definition}
We denote the minimal vertex set (i.e.\ the set of branch points) of $\Gamma$ by $V_\Gamma$. Note that $V_\Gamma$ is finite because $\Gamma$ is assumed to be compact. Also, $V_\Gamma$ is nonempty, because $\Gamma$ is not a circle. 

We denote the set of components of  $\Gamma \backslash V_\Gamma$ by $E_\Gamma$ and call its elements the \emph{open edges} of $\Gamma$.
By a \emph{closed edge} we mean the closure $\bar{e}$ of an open edge $e\in E_\Gamma$.
An open connected subset of an open edge of $\Gamma$ is called an \emph{open edge segment}, and the closure of such a segment is called a \emph{closed edge segment}.
If $e$ is a (closed or open) edge segment, the points in the topological boundary $\partial e$ of $e$ are called its \emph{endpoints}. Every edge has either $1$ or $2$ endpoints. 

Every finite combinatorial graph $G$ whose edges are weighted with positive real numbers naturally determines a metric graph $\Gamma_G$. A \emph{model} of a metric graph $\Gamma$ is a finite combinatorial weighted graph $G$, together with an isometry $\Gamma_G\xrightarrow{\phi} \Gamma$. Up to isomorphisms, the model $G$ is completely determined by the set $\phi(V(G))$, which is a vertex set for $\Gamma$, and conversely every vertex set determines a unique (up to isomorphism) model. If $G$ is the model of $\Gamma$ corresponding to the vertex set $V=V(G)$ (the isometry $\phi$ being implicit), then we call the elements of the set $E(G)$ of components of $\Gamma\backslash V$ the open edges of $G$. Of course, every open edge of $G$ is an open edge segment of $\Gamma$.

\begin{Remark}
Open edge segments of $\Gamma$ may be thought of as open edges of {\em some} model of $\Gamma$.
\end{Remark}

\begin{Definition} \label{def:conv} 
Let $S$ be a subset of a metric graph $\Gamma$.
The \emph{convex hull} of $S$, denoted by $\conv(S)$, is defined to be the union of $S$ and all closed edge segments whose endpoints are contained in $S$. 
\end{Definition}
It is not difficult to see that one can obtain $\Gamma \backslash \conv(S)$ by removing all connected components from $\Gamma \backslash S$ that are contained in some open edge of $\Gamma$.
A set $S\subseteq \Gamma$ is called \emph{convex}, if $\conv(S)=S$.

\begin{Definition}
We call a subset $S\subseteq \Gamma$ \emph{admissible} if it has only finitely many path-connected components. 
\end{Definition}
\begin{Remark}\label{rmk:admis}
\begin{itemize}
\item[]
\item[(i)] A subset $S\subseteq \Gamma$ is admissible if and only if there is a model $G$ of $\Gamma$ such that $S$ is a finite union of vertices and open edges of $G$. We say such a model $G$ is {\em compatible} with the admissible subset $S$. 
\item[(ii)] The collection of admissible sets is closed under finite Boolean combinations. 
\end{itemize}

\medskip

Let $S$ be a closed admissible subset of $\Gamma$ and let $p \in S$. For any sufficiently small star-shaped open neighborhood $B$ of $p$, the set $B\backslash S$ is a disjoint union of finitely many open edge segments, the number of which only depends on $p$.  We denote this number by $\val_S(p)$. Informally, this is the number of edges emanating from the admissible set $S$ at a point $p$. Clearly $\val_S(p) = 0$ if $ p \not\in \partial S$, where $\partial S$ denotes the topological boundary of $S$.

\end{Remark}
\begin{Definition} \label{def:genus}
Let $S$ be an admissible subset of a metric graph $\Gamma$.
\begin{itemize}
\item[(i)] The \emph{arithmetic genus} of $S$ is defined as $p_a(S) = 1-\chi(S)$, where $\chi(S)=\dim H^0(S;\RR)-\dim H^1(S;\RR)$ is the usual topological Euler characteristic of $S$. Here, $H^i(S;\RR)$ is the $i$-th singular cohomology group of $S$ with real coefficients.
\item[(ii)] The \emph{genus contribution} of $S$ is defined as $\psi(S)=p_a(\Gamma)-p_a(\Gamma \backslash S)$. 
\end{itemize}
\end{Definition}

\begin{Remark}\label{rmk:topology}
\begin{itemize}
\item[]
\item[(i)] A graph theorist might want to think of $H^1(S;\RR)$ as the vector space of $\RR$-valued {\em flows} on $S$. Moreover, $\dim H^0(S;\RR)$ is the number of connected  components of $S$. For a fine enough model $G$, the arithmetic genus $p_a(S)$ is equal to the number of closed edges corresponding to $G$ lying entirely inside $S$, minus the number of vertices of $G$ lying inside $S$, plus $1$. 
\item[(ii)] Recall that the geometric (or topological) genus of $S$ is defined as $p_g(S)= \dim H^1(S;\RR)$. Clearly, $p_g(S) = p_a(S)$ if and only if $S$ is connected. If $S$ is connected, we will refer to $p_g(S) = p_a(S)$ as the \emph{genus} of $S$.
\item[(iii)] If $S_1$ and $S_2$ are two open subsets of $\Gamma$, then the Euler characteristic obeys a version of the inclusion--exclusion principle:
\[
\chi(S_1\cup S_2) = \chi(S_1)+\chi(S_2) - \chi(S_1\cap S_2) \,.
\]
This follows from the Mayer-Vietoris sequence:
\[
\begin{aligned}
0 \!
&\rightarrow \!H^0(S_1\cup S_2, \RR)\!\!\!\!
&\rightarrow \! H^0(S_1, \RR) \oplus H^0(S_2, \RR) \!
&\rightarrow \! H^0(S_1\cap S_2, \RR) \! \rightarrow
 \\
&\rightarrow \!H^1(S_1\cup S_2, \RR) \!\!\!\!
&\rightarrow \! H^1(S_1, \RR) \oplus H^1(S_2, \RR) \!
&\rightarrow \! H^1(S_1\cap S_2, \RR) \! \rightarrow \! 0 \,.
\end{aligned}
\]
\end{itemize}
\end{Remark}

\subsection{Divisor theory on metric graphs}

Let $\Div(\Gamma)$ denote the free abelian group generated by the points of $\Gamma$. Denoting the generator corresponding to $p\in \Gamma$ by $(p)$, an element of $\Div(\Gamma)$, called a {\em divisor} on $\Gamma$, can be uniquely represented as 
\[D = \sum_{p \in \Gamma} a_p (p) \, ,\] 
where $a_p \in \ZZ$ and all but finitely many of the $a_p$ are zero. It is convenient to denote the coefficient $a_p$ in $D$ by $D(p)$. The {\em support} of $D$ is $\supp(D) = \{ p \in \Gamma \colon D(p) \ne 0\}$. A divisor $D \in \Div(\Gamma)$ is called \emph{effective} if $D(p)\geq 0$ for all $p\in \Gamma$.
For $D, E \in \Div(\Gamma)$, we write $E \leq D$ if $D-E$ is effective. The \emph{degree} of a divisor $D$ on $\Gamma$ is defined as $\deg(D) = \sum_{p \in \Gamma}D(p)$. More generally, the degree of a divisor $D$ on an admissible subset $S$ is 
 \[\deg(D|_S)=\sum_{p\in S}D(p) \, .\] 
 The set of divisors of a given degree $d$ is denoted by $\Div^d(\Gamma)$. The set of effective divisors of a given degree $d$ is denoted by $\Div_+^d(\Gamma)$.

Let $R(\Gamma)$ be the group of {\em continuous piecewise affine functions with integer slopes}. These are continuous functions $\phi\colon \Gamma\to \RR$ such that for every isometric map $\gamma\colon [0,\epsilon]\to \Gamma$, the pullback $\phi\circ \gamma$ is piecewise-linear with integral slopes in the usual sense. They are the tropical analogues of meromorphic functions on Riemann surfaces (\cite{MZ}). Note that such a function $\phi$ can only change its slope finitely many times on each closed edge.

Let 
\[\div \colon R(\Gamma) \rightarrow \Div(\Gamma)\] 
denote the Laplacian operator in the sense of distributions; for $\phi \in R(\Gamma)$, we have 
\[\div(\phi) =  \sum_{p \in \Gamma} {\sigma_p(\phi) (p)} \, ,\] 
where $\sigma_p(\phi)$ is the sum of incoming slopes of $\phi$ at $p$. It is easy to check that the group of {\em principal divisors} $\Prin(\Gamma) = \div(R(\Gamma))$ is contained in $\Div^0(\Gamma)$.

Two divisors $D_1$ and $D_2$ are called \emph{linearly equivalent}, written $D_1\sim D_2$, if there exists $\phi \in R(\Gamma)$ such that $D_1 - D_2 = \div(\phi)$. It is immediate that $\sim$ defines an equivalence relation. We denote the equivalence class of a divisor $D$ by $[D]$. The {\em complete linear system} $|D|$ is the set of all effective divisors linearly equivalent to $D$.

\begin{Remark} \label{rmk:chipfiring}
Given an effective divisor $D$, it is useful to think of $D(p)$ as
the number of {\em chips} placed at the point $p \in \Gamma$. 
For an admissible subset $S$ of $\Gamma$ and (sufficiently small) $\epsilon>0$, the rational function 
\[
\phi_{S,\epsilon}\colon\Gamma\to\RR,\;\;x\mapsto \min\{\epsilon, d(x,S)\} \, ,
\]
where $d$ is the metric on $\Gamma$, has value $0$ on $S$ and $\epsilon$ outside an $\epsilon$-neighborhood of $S$, with slope $1$ in each outgoing direction from $S$. 
Replacing $D$ with $D+\div(\phi_{S,\epsilon})$ has the effect of moving a chip to distance $\epsilon$ along each outgoing direction from $S$. This is often called `firing' the subset $S$ to distance $\epsilon$. One can check that every element of $R(\Gamma)$ can be written as a finite integer linear combination of functions of the form $\phi_{S,\epsilon}$. Therefore, one can describe linear equivalence of divisors on $\Gamma$ in terms of `chip-firing games'.
\end{Remark}

\section{Break and semibreak divisors} \label{sec:semibreak}
The notion of break divisors was introduced by Mikhalkin and Zharkov in \cite{MZ}, and further studied in \cite{ABKS}. Here, we introduce a natural generalization of this concept.

\begin{Definition}\label{def:genbr}
Let $\Gamma$ be a metric graph of genus $g$. 
\begin{itemize}
\item[(i)] A divisor $D$ on $\Gamma$ is called a  \emph{break divisor} if there exist $g$ disjoint open edge segments $e_1,\ldots, e_g$ and points $p_i\in \overline e_i$ such that $D=(p_1) + \cdots + (p_g)$ and $\Gamma \backslash \bigcup_{i=1}^g e_i$ is contractible.

\item[(ii)] A \emph{semibreak divisor} is an effective divisor $E$ such that $E\leq D$ for some break divisor $D$.
\end{itemize}
\end{Definition}
Note that a break divisor is also a semibreak divisor.

\begin{Remark}
\begin{itemize}
\item[]
\item[(i)] One can alternatively think of $\Gamma \backslash \bigcup_{i=1}^g e_i$ in Definition~\ref{def:genbr}(i) as a spanning tree of some model $G$ of the metric graph $\Gamma$.
\item[(ii)] The term `break' comes from the fact that a break divisor gives a recipe for fixing a fundamental domain in the maximal abelian covering of the metric graph $\Gamma$.
\item[(iii)] It is proved in \cite{MZ} and \cite{ABKS} that any equivalence class in degree $g$ has a unique break divisor representative. The uniqueness in this statement is relatively straightforward, and much of the work goes into proving the existence. In both \cite{MZ} and \cite{ABKS} the statement is proved by a careful study of the notion of `orientations'. 
One of our main results is that every effective divisor class of degree at most $g$ contains a semibreak divisor. Our approach is more topological in nature and, in particular, we completely avoid the use of orientations. 
\end{itemize}
\end{Remark}

We have the following useful characterization of break divisors. In the context of finite graphs, related statements can be found in the literature (see for example \cite[Proposition~4.11]{ABKS} and \cite[Theorem 3.4]{hiperTutte}). Some of the ideas in our proof below are adapted from the proof of \cite[Theorem 3.4]{hiperTutte}.

\begin{Proposition}
	\label{prop:break_char}
	Let $\Gamma$ be a metric graph of genus $g$, and let $D \in \Div^g(\Gamma)$. The following are equivalent:
\begin{itemize}
\item[(i)] $D$ is a break divisor.
\item[(ii)] $\deg(D|_S)\geq p_a(S)$ for all open admissible subsets $\emptyset \neq S\subseteq \Gamma$. 
\item[(iii)] $\deg(D|_S)\leq \psi(S)$ for all closed admissible subsets  $S\subsetneq \Gamma$.
\end{itemize}
\end{Proposition}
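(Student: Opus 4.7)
The strategy is first to show that (ii) and (iii) are equivalent by a complementation argument, and then to prove (i) $\iff$ (ii). For the first equivalence, if $S$ is a nonempty open admissible subset of $\Gamma$ then $T:=\Gamma\setminus S$ is a proper closed admissible subset (using the closure of admissible sets under Boolean combinations, Remark~\ref{rmk:admis}(ii)), and the assignment $S\mapsto T$ is a bijection between these two classes. Since $\Gamma=S\sqcup T$, we have $\deg(D|_S)+\deg(D|_T)=g$, and by definition $\psi(T)=g-p_a(S)$. Substituting these identities into the inequality of (iii) applied to $T$ gives precisely the inequality of (ii) applied to $S$, so (ii) $\iff$ (iii).

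For (i) $\Rightarrow$ (ii), suppose $D=\sum_{i=1}^{g}(p_i)$ is a break divisor with disjoint open edge segments $e_1,\ldots,e_g$ such that $T:=\Gamma\setminus\bigsqcup_i e_i$ is contractible, and let $S$ be an open admissible subset. Fix a compatible model whose vertex set contains all $p_i$ and all endpoints of the $e_i$. The plan is to compute $\chi(S)$ via Mayer--Vietoris (Remark~\ref{rmk:topology}(iii)), applied to an open cover of $\Gamma$ consisting of a small open thickening $U_T$ of $T$ together with the open edges $e_1,\ldots,e_g$. Since $U_T$ deformation retracts to $T$ and is therefore contractible, and since each $U_T\cap e_i$ is a disjoint union of small contractible arcs while the different $e_i$'s are pairwise disjoint, careful bookkeeping shows that an edge $e_i$ can raise $p_a(S)=1-\chi(S)$ only when its closure $\overline{e_i}$ lies entirely in $S$; but in that case $p_i\in\overline{e_i}\subseteq S$ automatically contributes to $\deg(D|_S)$, yielding $\deg(D|_S)\geq p_a(S)$.

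For (ii) $\Rightarrow$ (i), one first observes, by applying (ii) to small open neighborhoods of arbitrary points, that $D$ is effective; write $D=(q_1)+\cdots+(q_g)$ with repetitions allowed. The task is to produce disjoint open edge segments $e_1,\ldots,e_g$ with $q_j\in\overline{e_j}$ and $\Gamma\setminus\bigsqcup_i e_i$ contractible. After refining to a model $G$ with $\supp(D)\subseteq V(G)$, this reduces to a discrete problem: choose $g$ edges of $G$ (equivalently, a basis of the cocycle matroid, i.e., the complement of a spanning tree) each incident to a prescribed chip of $D$, respecting multiplicities. I expect this to be the technical heart of the proof: the inequalities in (ii)/(iii) must supply precisely the Hall-type (or matroid-union) feasibility condition for such a transversal to exist. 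A natural implementation is to induct on $g$: use (ii) to locate a single chord $e_g$ incident to some $q_j\in\supp(D)$ whose removal leaves a connected metric graph of genus $g-1$ on which $D-(q_j)$ still satisfies the analogue of (ii), and then apply the inductive hypothesis to obtain the remaining spanning segments.
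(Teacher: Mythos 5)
Your reduction of (ii)$\iff$(iii) by passing to complements is exactly the paper's argument and is fine.

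For (i)$\implies$(ii) you take essentially the paper's route (Mayer--Vietoris for $U_T\cup\bigcup e_i$ where $U_T$ is a small open thickening of the tree $T$), but the specific bookkeeping claim you lean on is wrong as stated. You assert that an edge $e_i$ can contribute to raising $p_a(S)$ only when its \emph{closure} $\overline{e_i}$ lies in $S$, so that $p_i\in\overline{e_i}\subseteq S$ automatically pays for it. In fact the contribution $\chi(S\cap e_i)-\chi(S\cap U_T\cap e_i)$ reaches its minimum value $-1$ as soon as the \emph{open} edge $e_i$ is contained in $S$; this does not force the endpoint $p_i$ into $S$. When $e_i\subseteq S$ but $p_i\notin S$, one must observe instead that some connected component of $S\cap U_T$ lies entirely inside $e_i$, so $\chi(S\cap U_T)$ (a count of tree components, one of which meets $T$) exceeds $1$ by at least the number of such edges; this compensating term is what makes the estimate go through. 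This is a real, not merely cosmetic, part of the argument and your sketch elides it.

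For (ii)$\implies$(i) the gap is more serious. You propose an induction on $g$: find a single chip $q_j$ and a non-bridge open edge $e$ with $q_j\in\overline e$, such that on $\Gamma'=\Gamma\setminus e$ the divisor $D-(q_j)$ still satisfies (ii), and recurse. But you do not prove that such a pair exists, and the obstruction is genuine: for an open admissible $S'\subseteq\Gamma'$ with $q_j\in S'$, the hypothesis (ii) on $\Gamma$ only gives $\deg(D|_{S'})\geq p_a(S')$, whereas the inductive hypothesis on $\Gamma'$ would require $\deg((D-(q_j))|_{S'})=\deg(D|_{S'})-1\geq p_a(S')$, i.e., the strictly stronger inequality $\deg(D|_{S'})\geq p_a(S')+1$. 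Thus any ``tight'' set $S$ (one where (ii) holds with equality) through $q_j$ blocks that choice, and a greedy removal can fail. A correct version of your plan would have to locate a chip and chord avoiding all tight sets through that chip, which amounts to a Hall/augmenting-path argument you have not carried out (and which you flag yourself as ``the technical heart''). The paper sidesteps this entirely: it starts from an arbitrary assignment of $g$ edges $e_1,\dots,e_g$ with $p_i\in\overline{e_i}$, uses (ii) only to prove a local claim (some $e_j$ has multiplicity $\geq 2$ or leaves the offending component $C$), and then shows that a single exchange strictly decreases the triple $\bigl(p_a(T),\,p_g(T),\,m(T)\bigr)$ lexicographically. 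That finite-descent framing is what makes the proof close; your inductive framing, as written, does not.
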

\begin{proof}
(ii)$\iff$(iii) follows immediately from exchanging $S$ with $\Gamma \backslash S$ and the definition of the genus contribution function $\psi$ (see Definition~\ref{def:genus} (ii)).

\medskip

(i)$\implies$(ii): Assume $D$ is a break divisor and let $e_1,\ldots, e_g$ be open edge segments in $\Gamma$ with $p_i\in \overline e_i$ such that $T=\Gamma\backslash \bigcup e_i$ is contractible and $D=\sum (p_i)$. Let $S$ be a nonempty open admissible subset of $\Gamma$. If $S\cap T=\emptyset$, then $S$ is a union of open edge segments and we have $p_a(S)\leq 0$ and there is nothing to prove. We may thus assume that $S\cap T\neq \emptyset$. 

Any sufficiently small open neighborhood of $T$ will intersect each $S\cap e_i$ in zero, one, or two connected components. Let $U$ be an open neighborhood  of $T$ such that $S\cap U\cap e_i$ has as few connected components as possible, for all $1\leq i\leq g$. 

\begin{itemize}
\item If $S\cap U\cap e_i$ has two connected components, then $\chi(S\cap U \cap e_i) = 2$ and $\chi(S\cap e_i) \geq 1$. 
\item If $S\cap U\cap e_i$ has one connected component, then $\chi(S\cap U \cap e_i) = 1$ and $\chi(S\cap e_i) \geq 1$.
\item If $S\cap U\cap e_i  = \emptyset $, then $\chi(S\cap U \cap e_i) = 0$ and $\chi(S\cap e_i) \geq 0$.
\end{itemize}
In any case, we have 
\begin{equation}\label{eq:eulerineq}
\chi(S\cap e_i) -\chi(S\cap U \cap e_i) +1 \geq 0 \, ,
\end{equation}
Equality in \eqref{eq:eulerineq} occurs if and only if $e_i\subseteq S$. 
\begin{itemize}
\item If $e_i\subseteq S$ and $p_i\notin S$, then $p_i$ is a boundary point of $e_i$ and at least one component of $S\cap U$ is contained entirely in $e_i$. 
\item As $U$ is a metric tree, every component of $S\cap U$ is a metric tree as well so that $\chi(S\cap U)$ equals the number of components of $S\cap U$. 
\end{itemize}
Together with the fact that $S\cap U$ has a component intersecting $T$ (recall that we assumed $S\cap T\neq \emptyset$), it follows that
\begin{equation}\label{eq:eulerineq2}
\chi(S\cap U) + \sum_{i\colon p_i\notin S} \big(\chi(S\cap e_i)-\chi(S\cap U\cap e_i)\big) \geq 1
\end{equation}

By the Mayer-Vietoris sequence (see Remark~\ref{rmk:topology}~(ii)), applied to the pair $(S\cap U,  S\cap (\bigcup_i  e_i))$, together with the fact that the Euler characteristic is additive with respect to disjoint unions of open sets, we have:

\[
\begin{aligned}
\chi(S) &= \chi(S\cap U) + \chi\bigg(S\cap \bigg(\bigcup_i  e_i\bigg)\bigg) - \chi\bigg(S\cap U \cap \bigg(\bigcup_i  e_i\bigg)\bigg)\\
&=\chi(S\cap U) + \sum_i \chi(S\cap e_i) - \sum_i \chi(S\cap U \cap e_i) \ .
\end{aligned}
\]
Therefore,
\[
\begin{aligned}
\chi(S)+\deg(D|_S) = 
\chi(S\cap U) &+ \sum_{i\colon p_i\notin S} \big(\chi(S\cap e_i)-\chi(S\cap U\cap e_i)\big) 
\\
&+\sum_{i\colon p_i\in S} \big(\chi(S\cap e_i)-\chi(S\cap U\cap e_i)+1\big)\, ,
\end{aligned}
\] 
which is at least $1$ by \eqref{eq:eulerineq} and \eqref{eq:eulerineq2}. Therefore, we have $\deg(D|_S)\geq p_a(S)$ as required. 
\medskip

(ii)$\implies$(i): Each point $p\in \Gamma$ has a neighborhood $B$ homeomorphic to a star-shaped set which is open and admissible. Since $\deg(D|_B) \geq p_a(B) = 0$, by taking $B$ to be sufficiently small so $\deg(D|_B) = D(p)$ we conclude that $D(p) \geq 0$. Therefore, $D$ must be effective. Let us denote $D=(p_1)+\dots +(p_g)$. 

Let $G$ be the model of $\Gamma$ corresponding to the vertex set $V_\Gamma\cup \supp(D)$. For each $p_i$ in the support of $D$, choose an open edge $e_i \in E(G)$ such that $p_i\in\bar{e}_i$. Note that for $i \ne j$ we might choose $e_i = e_j$. We define the {\em multiplicity} of the open edge $e \in E(G)$, denoted by $\mult(e)$, to be the number of indices $1\leq i\leq g$ with $e_i=e$.

Let $T=\Gamma \backslash \bigcup_{i=1}^g e_i$. Because we remove at most $g$ edges from the graph $\Gamma$ of genus $g$, we have $p_a(T)\geq 0$. Therefore, if the connected components of $T$ all have genus $0$, then $T$ must be connected. In this case, it is contractible and $D$ is a break divisor. We may thus assume that $T$ has a connected component of positive genus. We will show how to modify the choice of $\{e_1 , \ldots , e_g\}$ such that, eventually, $T$ becomes a spanning tree of $G$.

Let us denote by $m(T)$ the minimal number of edges of a component of $T$ of positive genus, and
let $C$ be a connected component of $T$ of positive genus with precisely $m(T)$ edges. After relabeling, we may assume that there exists $0\leq k\leq g$ such that for $1\leq i \leq g$ we have $p_i\in C$ if and only if $i\leq k$ ($k=0$ thus means that $\deg(D|_C)=0$).

\medskip
\noindent {\bf Claim.}
There exists $1\leq j\leq k$ such that {either} $\mult(e_j) \geq 2$, {or} $e_j$ connects $C$ to a different connected component of $T$. 
\medskip

\noindent {\em Proof of the Claim.} Suppose, to the contrary, that we have $\mult(e_i)=1$ and $\partial e_i \subseteq C$ for all $1\leq i\leq k$. Let $C'=C\cup \bigcup_{i=1}^k e_i$. Then $C'$ is closed and we have 
\[p_a(C')=k+p_a(C)  \geq k+1 \, .\] 
For any sufficiently small connected open neighborhood $V$ of $C'$ we then have
\[
\deg(D|_V)=\deg(D|_{C'})=\deg(D|_C)=k<k+1\leq p_a(C')=p_a(V)\, ,
\]
contradicting the assumption (ii).	

\medskip 

Let $1\leq j\leq k$ be as in the claim. We will substitute an edge $e'_j$ for $e_j$ as follows:
\begin{itemize}
\item [(1)] If $\mult(e_j)\geq 2$, then choose an arbitrary edge $e$ of $C$ incident to $p_j$ and set $e'_j=e$.
\item [(2)] If $\mult(e_j)=1$ and there exists an edge $e$ of $C$ incident to $p_j$ such that $C\backslash e$ is connected, we set $e'_j=e$.
\item [(3)] If $\mult(e_j)=1$, and every edge incident to $p_j$ disconnects $C$, we proceed as follows: for an edge $e$ of $C$ incident to $p_j$ we denote by $C_e$ the connected component of $C\backslash e$ that does not contain $p_j$. Then the sum $\sum_e p_g(C_g)$, where $e$ runs over all edges of $C$ incident to $p_j$, is equal to $p_g(C)$. In particular, this sum is positive, and hence there exists an edge $e$ of $C$ incident to $p_j$ such that $p_g(C_e)>0$. We set $e'_j=e$. 
\end{itemize}

Setting $e'_i= e_i$ for $i\neq j$, we have defined a new choice of edges. Denote $T'=\Gamma\backslash \bigcup e'_j$. If we are in case (1), then clearly $p_a(T')=p_a(T)-1$, whereas $p_a(T')=p_a(T)$ in cases (2) and (3). If we are in case (2), then the number of connected components of $T'$ is one less than the number of components of $T$. Therefore, we have $p_g(T')=p_g(T)-1$. In case (3), the number of components stays the same, so $p_g(T')=p_g(T)$. Finally, in case (3) the component $C_{e'_j}$ of $C\backslash e'_j$ is a component of $T'$ of positive genus. It clearly has less edges than $C$, and hence $m(T')< m(T)$. Summarizing this, we see that the triple $(p_a(T'), p_g(T'), m(T'))$ is strictly smaller than $(p_a(T), p_g(T), m(T))$ in the lexicographic order on $\NN^3$. As this is a well-order, we see that after modifying the choice $(e_i)$ of edges finitely many times as described above, we obtain a choice of edges whose complement has geometric genus $0$, finishing the proof.
\end{proof}

\section{(Sub)modularity of $\psi$} \label{sec:submod}

We will need the characterization of break divisors described in Proposition~\ref{prop:break_char} (iii) in terms of the genus contribution function $\psi$. Here, we record two important (sub)modularity properties of $\psi$.

\begin{Lemma}\label{l:f_is_submod}
Let $\Gamma$ be a metric graph. For any two closed admissible subsets  $S_1$ and $S_2$ of $\Gamma$ we have
\[
	\psi(S_1)+\psi(S_2)=\psi(S_1\cap S_2)+\psi(S_1\cup S_2) \, .
\]	
\end{Lemma}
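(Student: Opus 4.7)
The plan is to convert the identity into a statement about Euler characteristics of complements and invoke the inclusion--exclusion principle from Remark~\ref{rmk:topology}(iii). Since $p_a(X) = 1 - \chi(X)$, the definition $\psi(S) = p_a(\Gamma) - p_a(\Gamma\setminus S)$ rewrites as
\[
\psi(S) = \chi(\Gamma \setminus S) - \chi(\Gamma).
\]
Substituting this into both sides of the desired equation cancels the $\chi(\Gamma)$ contributions and reduces the claim to
\[
\chi(\Gamma\setminus S_1) + \chi(\Gamma\setminus S_2) = \chi\bigl(\Gamma\setminus(S_1\cap S_2)\bigr) + \chi\bigl(\Gamma\setminus(S_1\cup S_2)\bigr).
\]

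Next I would set $U_i = \Gamma \setminus S_i$ for $i=1,2$. Since each $S_i$ is closed, each $U_i$ is open. The elementary set-theoretic identities
\[
U_1 \cup U_2 = \Gamma \setminus (S_1 \cap S_2), \qquad U_1 \cap U_2 = \Gamma \setminus (S_1 \cup S_2)
\]
then allow me to rephrase the required equality as
\[
\chi(U_1) + \chi(U_2) = \chi(U_1 \cup U_2) + \chi(U_1 \cap U_2),
\]
which is exactly the inclusion--exclusion formula for open subsets recorded in Remark~\ref{rmk:topology}(iii). Before invoking that formula I would just remark that all the Euler characteristics are well-defined: admissibility of $S_1$ and $S_2$ is preserved under finite Boolean combinations (Remark~\ref{rmk:admis}(ii)), so all the relevant complements are admissible and in particular have finite-dimensional cohomology.

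There is no real obstacle here; the content of the lemma is simply that $\psi$ inherits modularity from the Euler characteristic via the complement map $S \mapsto \Gamma \setminus S$, which intertwines intersection with union. The only point to be careful about is applying the Mayer--Vietoris version of inclusion--exclusion to \emph{open} sets, which is why we pass to the complements rather than working with $S_1, S_2$ directly.
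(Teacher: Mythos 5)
Your proof is correct and follows essentially the same route as the paper's: both reduce the identity via $\psi(S) = \chi(\Gamma\setminus S) - \chi(\Gamma)$ to the corresponding additivity statement for Euler characteristics of the open complements, and then invoke the Mayer--Vietoris inclusion--exclusion formula from Remark~\ref{rmk:topology}. You simply spell out the intermediate substitutions and set-theoretic identities that the paper leaves implicit.
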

\begin{proof}
By Definition~\ref{def:genus}, it suffices to show that
\[
\chi\left(\Gamma\backslash S_1\right) +\chi\left(\Gamma\backslash S_2\right)=\chi\left(\Gamma\backslash(S_1\cap S_2)\right)+\chi\left(\Gamma\backslash(S_1\cup S_2)\right) \, .
\]
This follows from the Mayer-Vietoris sequence, applied to the couple of open subsets $(\Gamma \backslash S_1 , \Gamma \backslash S_2)$ of $\Gamma\backslash(S_1\cap S_2)$ (see Remark~\ref{rmk:topology}~(ii)). 
\end{proof}

We will frequently need to pass to the convex hull. To express $\psi(\conv(S))$ in terms of $\psi(S)$, we need the following definition.

\begin{Definition} 
\begin{itemize}
\item[]
\item[(i)] Let $S$ be an admissible subset of $\Gamma$.  We define $\diff(S)$ to be the number of open edge segments in $\Gamma \backslash S$ whose endpoints are contained in $S$. 
\item[(ii)] For two admissible subsets $S_1$ and $S_2$ of $\Gamma$ we define $\diff(S_1,S_2) = \diff(S_1\cup S_2)$. 
\end{itemize}
\end{Definition}
\begin{Remark}
If $S_1$ and $S_2$ are admissible and {\em convex}, then $\diff(S_1,S_2)$ is precisely the number of open edge segments contained in $\Gamma \backslash (S_1\cup S_2)$ that have one endpoint in $S_1 \backslash S_2$ and one endpoint in $S_2 \backslash S_1$. 
\end{Remark}

\begin{Lemma}
\label{l:comparison with convex hull}
Let $\Gamma$ be a metric graph.
\begin{itemize}
\item[(a)] Let $S$ be a closed admissible subset of $\Gamma$. Then we have
\[
\psi(\conv(S))= \psi(S)-\diff(S) \, .
\]
\item[(b)] For any two closed admissible subsets  $S_1$ and $S_2$ of $\Gamma$ we have
\[
	\psi(S_1)+\psi(S_2)=\psi(S_1\cap S_2)+\psi(\conv(S_1\cup S_2))+\diff(S_1,S_2)
\]
\end{itemize}
\end{Lemma}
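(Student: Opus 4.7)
The plan is to prove part (a) directly from the definitions via an Euler characteristic count, and then derive part (b) by combining part (a) with Lemma~\ref{l:f_is_submod}.

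For part (a), I start from the explicit description (recalled right after Definition~\ref{def:conv}) that $\Gamma\setminus\conv(S)$ is obtained from $\Gamma\setminus S$ by deleting those connected components which are entirely contained in some open edge of $\Gamma$. By the definition of $\diff(S)$, these are precisely the $\diff(S)$ open edge segments in $\Gamma\setminus S$ whose endpoints lie in $S$. Each such component is homeomorphic to an open interval, hence contractible with $\chi = 1$. Since each deleted component is clopen in $\Gamma\setminus S$, the space $\Gamma\setminus S$ decomposes as a disjoint union of open subsets $\Gamma\setminus\conv(S)$ and the $\diff(S)$ intervals, so additivity of the Euler characteristic on open disjoint unions gives
\[
\chi(\Gamma\setminus S) = \chi(\Gamma\setminus\conv(S)) + \diff(S).
\]
Unpacking the identity $\psi(T) = \chi(\Gamma\setminus T) - \chi(\Gamma)$, which follows immediately from $p_a = 1 - \chi$ applied to both $\Gamma$ and $\Gamma\setminus T$, and subtracting yields exactly $\psi(\conv(S)) = \psi(S) - \diff(S)$.

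For part (b), the strategy is pure reduction. The set $S_1\cup S_2$ is closed and admissible (admissibility is closed under finite Boolean combinations by Remark~\ref{rmk:admis}~(ii)), so Lemma~\ref{l:f_is_submod} applies and yields
\[
\psi(S_1) + \psi(S_2) = \psi(S_1\cap S_2) + \psi(S_1\cup S_2).
\]
Applying part (a) to $S = S_1\cup S_2$ then lets me substitute $\psi(S_1\cup S_2) = \psi(\conv(S_1\cup S_2)) + \diff(S_1\cup S_2)$. Recalling that $\diff(S_1,S_2) := \diff(S_1\cup S_2)$ by definition closes the computation.

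I do not foresee a serious obstacle: both parts amount to combinatorial bookkeeping in singular cohomology. The one pitfall is to ensure that the $\diff(S)$ removed sets in part~(a) are genuine full connected components of $\Gamma\setminus S$ rather than proper subsets of larger components, so that additivity of $\chi$ applies cleanly; but this is guaranteed precisely by the description of $\conv$ in the paragraph following Definition~\ref{def:conv}. A minor sanity check is that an open edge segment with one or two endpoints (both cases occur, as noted in Definition~\ref{def:conv}) is in either case an open interval, hence contractible.
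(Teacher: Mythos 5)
Your proof is correct and follows essentially the same route as the paper: part (a) via the explicit description of $\Gamma\setminus\conv(S)$, additivity of $\chi$ over the $\diff(S)$ removed open intervals, and the identity $\psi(T)=\chi(\Gamma\setminus T)-\chi(\Gamma)$; part (b) by combining part (a) applied to $S_1\cup S_2$ with Lemma~\ref{l:f_is_submod}.
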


\begin{proof}
As noted after Definition~\ref{def:conv}, one obtains $\Gamma \backslash \conv(S)$ from $\Gamma \backslash S$ by removing from it all connected components that are contained in an open edge. These connected components are precisely the open edge segments in $\Gamma \backslash S$ whose endpoints are contained in $S$, so there are $\diff(S)$ of them. The topological Euler characteristic of each open edge segment is $1$. So by the additivity of the topological Euler characteristic on disjoint unions, we have $\chi(\Gamma \backslash \conv(S))=\chi(\Gamma \backslash S)-\diff(S)$. Part (a) now follows from the definition of $\psi$ (Definition~\ref{def:genus} (ii)).
Part (b) follows from part (a) and Lemma~\ref{l:f_is_submod}.
\end{proof}

\section{Error functions and  error sets} \label{sec:err}
 
 \begin{Definition}\label{def:errorsets}
 Let $\Gamma$ be a metric graph of genus $g$. Let $D \in \Div_+^d(\Gamma)$, with $0 \leq d \leq g$.
 \begin{itemize}
 \item[(i)] For a closed admissible subset $S \subseteq \Gamma$ we define the \emph{$D$-error of $S$} as the integer
 \[\Error(D,S) = \deg(D|_S)-\psi(S)\, .\] 
 \item[(ii)] The \emph{$D$-max error} is defined to be the integer 
\[
\ME(D)= \max\{ \Error(D,S) \colon S\subsetneq \Gamma \text{ closed and admissible}\} \ .
\]
\item[(iii)] A \emph{$D$-max error set} is a closed and admissible (not necessarily proper) subset $S \subseteq \Gamma$ with $\Error(D,S)=\ME(D)$. 
\end{itemize}
\end{Definition}

\begin{Remark}\label{rmk:Err}
In the context of finite graphs, the function $\chi$ studied in \cite[\S4.1]{ABKS} is closely related to a combinatorial analogue of our function $\Error$.
\end{Remark}

\begin{Lemma} \label{lem:Err}
 Let $\Gamma$ be a metric graph of genus $g$. Let $D \in \Div_+^d(\Gamma)$, with $0 \leq d \leq g$.
\begin{itemize}
\item[(a)] For any two closed admissible subsets $S_1$ and $S_2$ of $\Gamma$ we have:
\[
\Error(D,S_1)+\Error(D,S_2)=\Error(D,S_1\cap S_2)+ \Error(D,S_1\cup S_2)
\]
and
\[
\Error(D,S_1)+\Error(D,S_2)+\diff(S_1,S_2)\leq\Error(D,S_1\cap S_2)+ \Error(D,\conv(S_1\cup S_2)) \ .
\]
\item[(b)] We have $\ME(D) \geq 0$.
\item[(c)] $D$ is a break divisor if and only if $\deg(D)=g$ and $\ME(D)=0$.
\item[(d)]  $\Error(D,\Gamma)>\ME(D)$ if and only if $D$ is a break divisor.
\end{itemize}
\end{Lemma}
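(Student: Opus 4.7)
The plan is to derive all four parts directly from the additive structure of $\deg(D|_{\cdot})$, the (sub)modularity of $\psi$ established in Section~\ref{sec:submod}, and the characterization in Proposition~\ref{prop:break_char}.

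For part (a), I treat the equality and the inequality separately. The equality rests on two parallel inclusion-exclusion identities: first, since $D$ is an integer-valued counting measure on $\Gamma$,
\[
\deg(D|_{S_1}) + \deg(D|_{S_2}) = \deg(D|_{S_1\cap S_2}) + \deg(D|_{S_1\cup S_2});
\]
second, Lemma~\ref{l:f_is_submod} gives the analogous modular identity for $\psi$. Subtracting the two yields the desired identity for $\Error$. For the inequality I would substitute Lemma~\ref{l:comparison with convex hull}(b) in place of Lemma~\ref{l:f_is_submod}, which trades $\psi(S_1\cup S_2)$ for $\psi(\conv(S_1\cup S_2)) + \diff(S_1,S_2)$. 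On the degree side, effectivity of $D$ together with $S_1\cup S_2\subseteq \conv(S_1\cup S_2)$ supplies the one-sided estimate $\deg(D|_{S_1\cup S_2}) \leq \deg(D|_{\conv(S_1\cup S_2)})$. Combining these produces the claimed inequality, with the $\diff(S_1,S_2)$ term landing on the $\Error$-side with the displayed sign.

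For part (b), I would simply take $S=\emptyset$: it is closed, admissible (zero components), and properly contained in $\Gamma$; both $\deg(D|_\emptyset)=0$ and $\psi(\emptyset)=p_a(\Gamma)-p_a(\Gamma)=0$ vanish, so $\Error(D,\emptyset)=0$ and hence $\ME(D)\geq 0$. Part (c) is then immediate from Proposition~\ref{prop:break_char}(iii): the condition $\deg(D|_S)\leq \psi(S)$ for every proper closed admissible $S$ is exactly $\ME(D)\leq 0$, which in view of (b) is $\ME(D)=0$. For part (d), I would compute
\[
\Error(D,\Gamma) = \deg(D) - \psi(\Gamma) = d - (g - p_a(\emptyset)) = d - g + 1,
\]
using $p_a(\emptyset)=1$ (since $\chi(\emptyset)=0$ in singular cohomology). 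If $D$ is a break divisor then $d=g$ and $\ME(D)=0$ by (c), so $\Error(D,\Gamma) = 1 > \ME(D)$. Conversely, $\Error(D,\Gamma) > \ME(D) \geq 0$ forces $d - g + 1 > 0$, hence $d \geq g$; the standing hypothesis $d\leq g$ then pins $d = g$, whence $\Error(D,\Gamma) = 1$ squeezes $\ME(D)$ to $0$, and (c) identifies $D$ as a break divisor.

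The only slightly delicate step is the inequality in part (a): one must arrange the $\diff$-term correctly when enlarging $S_1\cup S_2$ to its convex hull and invoke effectivity of $D$ at precisely the right place. Everything else is routine bookkeeping on top of Lemmas~\ref{l:f_is_submod} and~\ref{l:comparison with convex hull} and Proposition~\ref{prop:break_char}.
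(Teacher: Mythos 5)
Your proof is correct and follows essentially the same route as the paper's: part (a) by subtracting the additive identity for $\deg(D|_{\cdot})$ from the (sub)modular identities for $\psi$ in Lemmas~\ref{l:f_is_submod} and~\ref{l:comparison with convex hull}(b), using effectivity for the convex-hull estimate; part (b) via $\Error(D,\emptyset)=0$; part (c) by Proposition~\ref{prop:break_char}(iii); and part (d) by computing $\Error(D,\Gamma)=d-g+1$ and squeezing with (b) and (c). No meaningful divergence from the paper's argument.
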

\begin{proof}
The equality in part (a) is a combination of Lemma~\ref{l:f_is_submod} and the obvious fact that 
\[
\deg(D|_{S_1})+\deg(D|_{S_2})=\deg(D|_{S_1\cap S_2})+\deg(D|_{S_1\cup S_2}) \, .
\]
To prove the inequality we use the same fact about degrees and combine it with Lemma~\ref{l:comparison with convex hull}~(b) and the fact that $\deg(D|_{\conv(S_1\cup S_2)})\geq \deg(D_{S_1\cup S_2})$ because $D$ is effective.

Part (b) follows from $\Error(D,\emptyset)=0$.

Part (c) follows Proposition~\ref{prop:break_char} and part (b).

For part (d), we first note that $\Error(D,\Gamma)= d-g+1$. If $D$ is a break divisor, we have $\Error(D,\Gamma) = 1$ but $\ME(D) = 0$ by part (c). Conversely, if $\Error(D,\Gamma)>\ME(D)$, then the nonnegativity of $\ME(D)$ implies that $d-g+1\geq 1$. This implies that $d=g$, and hence that $\ME(D)=0$. By part (c), we conclude that $D$ is a break divisor.
\end{proof}

\begin{Lemma}
\label{l:properties of max error sets}
 Let $\Gamma$ be a metric graph of genus $g$. Let $D \in \Div_+^d(\Gamma)$, with $0 \leq d \leq g$. Assume $\ME(D) >0$.
 \begin{itemize}
 \item[(a)] If $S$ is a $D$-max error set, then $S$ is convex.
  \item[(b)] If $S_1$ and $S_2$ are two $D$-max error sets, then $S_1\cup S_2$ and $S_1\cap S_2$ are also $D$-max error sets. Moreover $\diff(S_1,S_2)=0$.
  \end{itemize}
 \end{Lemma}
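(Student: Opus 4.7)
\medskip

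\noindent\textbf{Proof proposal.} The plan is to derive both parts directly from the (sub)modular properties of $\Error$ recorded in Lemma~\ref{lem:Err}(a) together with the comparison-to-convex-hull identity in Lemma~\ref{l:comparison with convex hull}(a). The crucial auxiliary fact I will repeatedly use is that under the hypothesis $\ME(D)>0$, Lemma~\ref{lem:Err}(c) says $D$ is \emph{not} a break divisor, and so Lemma~\ref{lem:Err}(d) gives $\Error(D,\Gamma)\leq \ME(D)$. Consequently $\Error(D,S)\leq \ME(D)$ for every closed admissible $S\subseteq \Gamma$, proper or not.

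For part (a), suppose toward contradiction that a $D$-max error set $S$ is not convex, so that $\diff(S)\geq 1$. Combining $\psi(\conv(S))=\psi(S)-\diff(S)$ from Lemma~\ref{l:comparison with convex hull}(a) with the inequality $\deg(D|_{\conv(S)})\geq \deg(D|_S)$ (using that $D$ is effective) yields
\[
\Error(D,\conv(S))\;\geq\;\Error(D,S)+\diff(S)\;\geq\;\ME(D)+1.
\]
If $\conv(S)\subsetneq \Gamma$, this already contradicts the definition of $\ME(D)$. If instead $\conv(S)=\Gamma$, the inequality above gives $\Error(D,\Gamma)>\ME(D)$, which by Lemma~\ref{lem:Err}(d) forces $D$ to be a break divisor and then by (c) forces $\ME(D)=0$, contradicting the hypothesis.

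For part (b), the equality in Lemma~\ref{lem:Err}(a) applied to the two $D$-max error sets gives
\[
\Error(D,S_1\cap S_2)+\Error(D,S_1\cup S_2)\;=\;\Error(D,S_1)+\Error(D,S_2)\;=\;2\,\ME(D).
\]
By the observation above, each summand on the left is at most $\ME(D)$, so both must equal $\ME(D)$; hence $S_1\cap S_2$ and $S_1\cup S_2$ are $D$-max error sets. To show $\diff(S_1,S_2)=0$, I would then feed the two $D$-max error sets into the \emph{inequality} in Lemma~\ref{lem:Err}(a) to obtain
\[
2\,\ME(D)+\diff(S_1,S_2)\;\leq\;\Error(D,S_1\cap S_2)+\Error(D,\conv(S_1\cup S_2))\;\leq\;2\,\ME(D),
\]
where the last step again uses $\Error(D,\cdot)\leq \ME(D)$ on every closed admissible set (including $\Gamma$, should $\conv(S_1\cup S_2)$ happen to exhaust $\Gamma$). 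Since $\diff(S_1,S_2)\geq 0$, equality must hold throughout.

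The only real obstacle is the edge case where $\conv(S)$ or $\conv(S_1\cup S_2)$ equals $\Gamma$: the definition of $\ME(D)$ ranges only over proper subsets, so a bound on $\Error(D,\Gamma)$ is not automatic. This is exactly where the chain (c)$\Rightarrow$(d) from Lemma~\ref{lem:Err} is indispensable, and it is why the hypothesis $\ME(D)>0$ (rather than the weaker $\ME(D)\geq 0$) is present in the statement.
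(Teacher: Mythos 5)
Your proposal is correct and follows essentially the same route as the paper: part (a) uses Lemma~\ref{l:comparison with convex hull}(a) and the effectiveness of $D$ to show $\Error$ strictly increases on passing to the convex hull, with the case $\conv(S)=\Gamma$ handled via Lemma~\ref{lem:Err}(c),(d); and part (b) uses the equality in Lemma~\ref{lem:Err}(a) together with the bound $\Error(D,\cdot)\leq\ME(D)$ to squeeze out equality. The only small divergence is in the final claim of (b): the paper derives $\diff(S_1,S_2)=0$ by invoking part (a) (the just-proven convexity of the $D$-max error set $S_1\cup S_2$), whereas you rederive it from the inequality in Lemma~\ref{lem:Err}(a); both work, and the paper's reuse of (a) is marginally shorter. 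Your upfront observation that $\Error(D,S)\leq\ME(D)$ holds for \emph{all} closed admissible $S$ (including $\Gamma$) once $\ME(D)>0$ is a nice packaging that unifies the two edge-case checks the paper performs in-line.
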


\begin{proof}

(a) If $S$ is not convex, then $\psi(\conv(S))<\psi(S)$ by Lemma \ref{l:comparison with convex hull}~(a). Since $S \subseteq \conv(S)$ we always have $\deg(D|_{\conv(S)})\geq \deg(D|_S)$. It follows that 
\[\Error(D,\conv(S))> \Error(D,S)=\ME(D) \, .\] 
If $\conv(S) \ne \Gamma$ this is a contradiction (see Definition~\ref{def:errorsets} (ii)). If $\conv(S) = \Gamma$ then 
\[ 1 \geq  \Error(D,\Gamma) > \ME(D) > 0\, ,\] 
which, again, is a contradiction.

(b) First, we observe that $\Error(D, S_1\cup S_2) \leq \ME(D)$. If $S_1 \cup S_2 \ne \Gamma$ this follows directly from Definition~\ref{def:errorsets}~(ii). If $S_1 \cup S_2 = \Gamma$, then it follows from Lemma~\ref{lem:Err}(d) and the fact that $D$ is not a break divisor by assumption.

Together with Definition~\ref{def:errorsets} (ii) and Lemma~\ref{lem:Err} (a), it	 follows that

\[
\begin{aligned}
2\ME(D) &\geq \Error(D,S_1\cap S_2)+ \Error(D, S_1\cup S_2) \\ 
&=\Error(D,S_1)+\Error(D,S_2) \\
&=2\ME(D) \, .
\end{aligned}
\]
Therefore, $ \Error(D,S_1\cap S_2)= \Error(D, S_1\cup S_2) = \ME(D)$, that is $S_1\cap S_2$ and $S_1\cup S_2$ are $D$-max error sets. By part (a), it follows that $S_1\cup S_2$ is convex and hence that $\diff(S_1,S_2)=\diff(S_1\cup S_2)=0$.
\end{proof}

\begin{Proposition}\label{prop:minmax}
Let $\Gamma$ be a metric graph of genus $g$. Let $D \in \Div_+^d(\Gamma)$, with $0 \leq d \leq g$. Then there exists a unique smallest (with respect to inclusion) $D$-max error set in $\Gamma$. If $G$ is the model corresponding to the vertex set $V(G)= V_\Gamma \cup \, \supp(D)$, then this smallest $D$-max error set is of the form
\begin{equation}
\label{eq:form of error set}
I\cup \bigcup_{e \in J} \bar{e}
\end{equation}
for some $I\subseteq V(G)$ and $J \subseteq E(G)$.
\end{Proposition}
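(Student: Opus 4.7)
The plan is to split into two cases according to the value of $\ME(D)$. If $\ME(D)=0$, then $S^*=\emptyset$ works: $\Error(D,\emptyset)=0=\ME(D)$ makes $\emptyset$ a $D$-max error set, it is trivially contained in every other subset of $\Gamma$, and it has the required form with $I=J=\emptyset$.

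If $\ME(D)>0$, the idea is to work inside the \emph{finite} subfamily $\mathcal{F}_0$ of $D$-max error sets of the form $I\cup \bigcup_{e\in J}\bar{e}$ with $I\subseteq V(G)$ and $J\subseteq E(G)$. Finite intersections of such sets are again of the same form and remain $D$-max error sets by Lemma~\ref{l:properties of max error sets}(b), so once $\mathcal{F}_0$ is shown to be nonempty the intersection $S^*=\bigcap_{S\in\mathcal{F}_0}S$ automatically belongs to $\mathcal{F}_0$. The main task---and the step I expect to be the main obstacle---is to show that every $D$-max error set $S$ contains some element of $\mathcal{F}_0$; this simultaneously gives $\mathcal{F}_0\neq\emptyset$ and $S^*\subseteq S$, making $S^*$ the unique smallest $D$-max error set.

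To prove this containment claim, I would start with any $D$-max error set $S$. Lemma~\ref{l:properties of max error sets}(a) makes $S$ convex, and this combined with admissibility restricts $S\cap \bar{e}$ for each edge $e$ of $G$ (with endpoints $u,v$) to one of four shapes: (i) the full closed edge $\bar{e}$ (both endpoints in $S$); (ii) a subinterval $[u,a]\subseteq\bar{e}$ attached at exactly one endpoint $u\in S$ with $a\in [u,v)$; (iii) an isolated closed subinterval $[a,b]$ lying entirely in the open edge $e$ (neither endpoint of $e$ in $S$); or (iv) the empty set. The plan is to rule out shape (iii) and to reduce shape (ii) with $a>u$ to $\{u\}$, each without changing $\Error(D,\cdot)$, so that after processing every edge the result lies in $\mathcal{F}_0$.

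For shape (iii), the piece $[a,b]$ is an isolated path-component of $S$ containing no points of $\supp(D)\subseteq V(G)$, and removing it yields a proper closed admissible subset of $\Gamma$ with unchanged degree. Applying the Mayer--Vietoris principle of Remark~\ref{rmk:topology}(iii) to the open cover of $\Gamma\setminus(S\setminus[a,b])$ by $\Gamma\setminus S$ and a small open arc $(a-\epsilon,b+\epsilon)$ around $[a,b]$, whose pairwise intersection is a disjoint union of two open arcs, yields $\chi(\Gamma\setminus(S\setminus[a,b]))=\chi(\Gamma\setminus S)-1$; hence $\Error$ would strictly exceed $\ME(D)$ on the resulting proper subset, contradicting the definition of $\ME(D)$. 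For shape (ii) with $a>u$, removing the half-open segment $(u,a]$ from $S$ leaves a closed admissible set (the only nontrivial point being that $a$ is not a limit point of the result, which follows from convexity of $S$ and the fact that the open edge $e$ contains no other points of $S$) with the degree unchanged; an analogous Mayer--Vietoris computation, this time with the neighborhood $(u,a+\epsilon)$ whose intersection with $\Gamma\setminus S$ is a single open arc, shows that $\chi(\Gamma\setminus\cdot)$ is unchanged. Iterating this edge by edge across $G$ produces an element of $\mathcal{F}_0$ contained in $S$, as required.
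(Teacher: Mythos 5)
Your proof is correct and follows essentially the same route as the paper: prune the interior-of-edge pieces of a max error set while tracking $\psi$ via Mayer--Vietoris (so that $\Error$ cannot drop and therefore, by maximality, must stay equal, forcing the pruned set to remain a max error set of the form \eqref{eq:form of error set}), and then take the intersection of the finite family of such sets, which is again a max error set by Lemma~\ref{l:properties of max error sets}(b). The paper streamlines your shapes (ii)/(iii) by removing all of $S\cap e$ from $S$ for a whole open edge $e\in E(G)$ at once, computing $\chi(\Gamma\backslash(S\backslash e))=\chi(\Gamma\backslash S)+\chi(e)-\chi(e\backslash S)$ uniformly (your shape (iii) contradiction is exactly the strict case of this inequality), and this also silently handles the loop-edge shape that your four-case list overlooks, namely $S\cap\bar e$ an arc containing the unique endpoint $u$ in its interior.
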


\begin{proof}
Let $S$ be a $D$-max error set. Suppose that there exists a point in the boundary $\partial S$ of $S$ that is contained in an open edge $e \in E(G)$. Since $S$ is convex, $e\backslash S$ is either an open edge segment or a disjoint union of two open edge segments. So, by the Mayer-Vietoris sequence (see Remark~\ref{rmk:topology}~(ii)), we have
\[
\chi(\Gamma\backslash(S\backslash e))= \chi(\Gamma\backslash S)+ \chi(e)-\chi(e\backslash S) \leq \chi(\Gamma\backslash S)  
\]
and hence
$\psi(S\backslash e)\leq \psi(S)$. As we also have $\deg(D|_S)=\deg(D|_{S\backslash e})$ by definition of $G$, we see that $\Error(D,S\backslash e)\geq \Error(D,S)$ and hence that $S\backslash e$ is a $D$-max error set. This shows that every $D$-max error set contains a $D$-max error set of the form (\ref{eq:form of error set}). As there are only finitely many sets of this form, we see that every $D$-max error set contains an inclusion minimal $D$-max error set, and that all of these are of the form (\ref{eq:form of error set}). By Lemma~\ref{l:properties of max error sets}~(b), the intersection of all minimal $D$-max error sets is also a $D$-max error set, hence this is the unique smallest $D$-max error set.
\end{proof}

\begin{Definition}
We will denote the unique minimal $D$-max error subset of $\Gamma$ (as in Proposition~\ref{prop:minmax}) by $\minmax(D)$. 
\end{Definition}
\begin{Remark}\label{rmk:MEzero}
$\minmax(D)$ is always a proper subset of $\Gamma$, as the maximum error is taken for proper (admissible) subsets.
We have $\ME(D)=0$ if and only if $\minmax(D) = \emptyset$.
\end{Remark}

Our next goal is to prove a key result about $\minmax(D)$. We recall two standard notations. 
Let $(X, d)$ be a metric space.
\begin{itemize}  
\item For two nonempty subsets $A, B \subseteq X$, one defines
\[
\dist(A, B) = \inf\{d(x, y) \colon x \in A, y \in B\} \, .
\]
\item For a nonempty subset $A \subseteq X$, one defines its $\epsilon$-fattening by
\[
A_{\epsilon} = \bigcup_{x \in A} \{z \in X \colon d(z,x) \leq \epsilon \} \, .
\]
\end{itemize}

\begin{Proposition}
\label{prop:comparing error after firing}
 Let $\Gamma$ be a metric graph of genus $g$. Let $D \in \Div_+^d(\Gamma)$, with $0 \leq d \leq g$. Let $S = \minmax(D)$, and
 assume that $\ME(D) > 0$. 

 \begin{itemize}
\item[(a)] Let $\epsilon = \dist\left(S,V_{\Gamma} \backslash S\right)$. Let $D_1$ be the divisor obtained from $D$ by `firing' $S$ to distance $\epsilon$. In other words, $D_1 = D+\div(\phi_{S,\epsilon})$, where $\phi_{S,\epsilon}$ is as in Remark~\ref{rmk:chipfiring}. Then
\begin{itemize} 
\item[(i)] $D_1$ is effective. 
\item[(ii)] $\ME(D_1)\leq \ME(D)$. 
\item[(iii)] If $\ME(D_1)= \ME(D)$, then $S_\epsilon \subseteq \minmax(D_1)$.  
\end{itemize}
\vspace{2mm}
\item[(b)] Let $e^-\in \partial S$ and $e^+ \in V_\Gamma \backslash S$ be endpoints of an open edge segment $e \subseteq \Gamma \backslash S$. Let $D_2$ be the divisor obtained from $D$ by moving a chip from $e^-$ to $e^+$. In other words, $D_2=D+(e^+)-(e^-)$. Then 
\begin{itemize} 
\item[(i)]  $D_2$ is effective. 
\item[(ii)] $\ME(D_2)\leq \ME(D)$.
\item[(iii)] If $\ME(D_2) =  \ME(D)$, then $S \cup \{e^+\} \subseteq \minmax(D_2)$. 
\end{itemize}
 \end{itemize}
\end{Proposition}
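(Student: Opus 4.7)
The plan is to prove both parts by the same template: use minimality of $S = \minmax(D)$ (Proposition~\ref{prop:minmax}) to get local control of $D$ near $\partial S$, yielding the effectiveness statements (i); then exploit that the chip move from $D$ to $D_i$ is supported in a controlled neighborhood of $S$ together with the modularity of $\Error$ (Lemma~\ref{lem:Err}(a)) to transfer the max-error bound for $D$ into one for $D_i$. I will focus on part (a); part (b) is a simpler analogue.

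\textbf{Effectiveness (parts (a)(i) and (b)(i)).} The key auxiliary inequality is that
\[
D(p) \ge \val_S(p) \quad \text{for every } p \in \partial S .
\]
Fix such a $p$ and let $G$ be the model with $V(G) = V_\Gamma \cup \supp(D)$. Take a star-shaped open neighborhood $B$ of $p$ so small that $B$ meets no other point of $V(G) \cup (V_\Gamma \setminus S)$. Then $S' := S \setminus B$ is a closed admissible subset strictly smaller than $S$. Applying Mayer--Vietoris to the open cover $\{\Gamma \setminus S,\, B\}$ of $\Gamma \setminus S' = (\Gamma \setminus S) \cup B$ gives $\chi(\Gamma \setminus S') = \chi(\Gamma \setminus S) + 1 - \val_S(p)$, and combined with $\deg(D|_{S'}) = \deg(D|_S) - D(p)$ this yields $\Error(D, S') = \Error(D, S) - D(p) - 1 + \val_S(p)$. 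Minimality of $\minmax(D)$ forces $\Error(D, S') < \Error(D, S)$, establishing $D(p) \ge \val_S(p)$. Then $D_1(p) = D(p) - \val_S(p) \ge 0$ at every $p \in \partial S$, while $D_1 \ge 0$ holds trivially elsewhere, giving (a)(i). Specializing $p = e^-$ and using $\val_S(e^-) \ge 1$ (since $e$ is outgoing at $e^-$) gives (b)(i).

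\textbf{Max-error bounds (parts (a)(ii) and (b)(ii)).} The structural fact at the core is that $\div(\phi_{S,\epsilon})$ is supported in $S_\epsilon$, since $\phi_{S,\epsilon}$ is constant equal to $\epsilon$ on $\Gamma \setminus S_\epsilon$; analogously $D_2 - D = (e^+) - (e^-)$ is supported in $S \cup \{e^+\}$. Consequently, for every closed admissible $U \supseteq S_\epsilon$ we have $\deg(D_1|_U) = \deg(D|_U)$ and hence $\Error(D_1, U) = \Error(D, U)$. For an arbitrary closed admissible $T \subsetneq \Gamma$, the modularity of $\Error$ applied to $(T, S_\epsilon)$ for $D_1$, together with this identity, yields
\[
\Error(D_1, T) = \Error(D_1, T \cap S_\epsilon) + \Error(D, T \cup S_\epsilon) - \Error(D, S_\epsilon) ,
\]
where $\Error(D, T \cup S_\epsilon) \le \ME(D)$ (using $\Error(D, \Gamma) \le 1 \le \ME(D)$ to handle the degenerate case $T \cup S_\epsilon = \Gamma$). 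Hence $\Error(D_1, T) \le \ME(D)$ will follow from the technical inequality
\[
\Error(D_1, A) \le \Error(D, S_\epsilon) \quad \text{for every closed admissible } A \subseteq S_\epsilon ,
\]
with strict inequality when $A \subsetneq S_\epsilon$.

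\textbf{Main obstacle and parts (iii).} I expect the proof of this last inequality to be the main difficulty. Granting it, (a)(ii) is immediate, and for (a)(iii) equality $\Error(D_1, T) = \ME(D)$ forces $T \cap S_\epsilon = S_\epsilon$, i.e.\ $S_\epsilon \subseteq T$; applying this to every $D_1$-max error set and invoking the intersection characterization of $\minmax(D_1)$ (Proposition~\ref{prop:minmax}) gives $S_\epsilon \subseteq \minmax(D_1)$. To prove the inequality, the plan is to compare $A$ with a companion set $A' \subseteq S$ obtained by retracting each ``finger'' of $A$ in the annulus $S_\epsilon \setminus S$ back into $S$; a local Mayer--Vietoris computation along $\partial S$ should show that the chip redistribution from firing compensates exactly the resulting change in Euler characteristic, so $\Error(D_1, A) = \Error(D, A')$, and then maximality of $\ME(D)$ together with minimality of $S = \minmax(D)$ yield the (strict) inequality. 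Part (b) follows from the same template with $S \cup \{e^+\}$ in place of $S_\epsilon$; the analogue of the technical claim is considerably simpler there, because $D_2 - D$ is a single chip hop across a single edge.
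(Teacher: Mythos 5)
Your treatment of effectiveness (parts (i)) is correct and is essentially the paper's ``Claim~1'': a small ball $B$ around $p\in\partial S$, minimality of $\minmax(D)$ forcing $\Error(D,S\setminus B)<\Error(D,S)$, and a Mayer--Vietoris computation yielding $D(p)\ge\val_S(p)$.

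For parts (ii) and (iii) you take a genuinely different route from the paper. The paper picks $R=\minmax(D_1)$, applies the \emph{sub}modular inequality (with $\conv$) from Lemma~\ref{lem:Err}(a) to the pair $(S,R)$ \emph{for the divisor $D$}, and then compares $\Error(D,R)$ with $\Error(D_1,R)$ via a chip-count identity; the crucial technical input is its ``Claim~2,'' which bounds the number of fired chips that enter $R$ by $\diff(S,R)$. You instead apply the exact modular identity from Lemma~\ref{lem:Err}(a) to the pair $(T,S_\epsilon)$ \emph{for the divisor $D_1$}, using that $\div(\phi_{S,\epsilon})$ is supported in $S_\epsilon$. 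The reduction you carry out is correct as far as it goes, and the ``degenerate'' case $T\cup S_\epsilon=\Gamma$ is handled appropriately via $\Error(D,\Gamma)=d-g+1\le 1\le\ME(D)$. The two approaches locate the same combinatorial difficulty in different guises.

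However, there is a genuine gap: everything hinges on your ``technical inequality''
\[
\Error(D_1,A)\le\Error(D,S_\epsilon)\quad\text{for all closed admissible }A\subseteq S_\epsilon,
\]
with strict inequality when $A\subsetneq S_\epsilon$, and you do not prove it. The ``plan'' you offer — retracting the fingers of $A$ in $S_\epsilon\setminus S$ to get $A'\subseteq S$ and claiming $\Error(D_1,A)=\Error(D,A')$ exactly — does not hold in general. A quick sanity check: for $\Gamma$ two vertices joined by four unit edges (genus $3$), $D=3(p)$ with $p$ interior on one edge at distance $a<\tfrac12$ from $u$, one has $S=\{p\}$, $\epsilon=a$, and $D_1=(p)+(u)+(p')$. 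Taking $A=[u,p]\subsetneq S_\epsilon$ and the natural retraction $A'=\{p\}$, one finds $\Error(D_1,A)=1$ but $\Error(D,A')=2$; these are unequal, so the ``compensation'' you expect from local Mayer--Vietoris is not exact. You would need at least an inequality, and the strictness for proper $A$ (which you need for (iii)) requires an argument of its own. This is precisely where the substance of the paper's proof lies, so the proposal as written is not complete. The same gap propagates to part (b), which you assert is a ``simpler analogue'' without carrying it out.
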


\begin{proof}
We observe that neither can $S$ be empty, since $\ME(D)>0$, nor can it contain $V_\Gamma$, as this would imply $S=\conv(S)=\Gamma$, a contradiction (see Remark \ref{rmk:MEzero}).

\medskip
\noindent{\bf Claim 1.} For every $p\in S$ we have $\val_S(p)\leq D(p)$.

\medskip
\noindent{\em Proof of Claim 1.} If $p \not\in \partial S$ we have $\val_S(p)=0$ and there is nothing to prove. 
Let $p\in \partial S$. Let $B$ be a sufficiently small open neighborhood of $p$, isometric to a star-shaped set and not containing any point in $V_\Gamma\cup \,\supp(D)$ aside from $p$. 

By the minimality of $S$, we know $S\backslash B$ is not a $D$-max error set, so 
\begin{equation} \label{eq:1}
\Error(D,S) \geq \Error(D,S\backslash B) +1 \, .
\end{equation}

By the choice of $B$, we have 
\begin{equation} \label{eq:2}
\deg(D|_{S\backslash B}) = \deg(D|_S) -D(p)\, .
\end{equation}

Furthermore, we have
\[
\chi(\Gamma \backslash (S\backslash B))=\chi(\Gamma\backslash S)  +\chi (B) - \chi (B\backslash S) 
\]
by Mayer-Vietoris (Remark~\ref{rmk:topology}~(ii)), which equals $\chi(\Gamma\backslash S) +1 -\val_S(p)$ by the choice of $B$.
Therefore, 
\begin{equation} \label{eq:3}
\psi(S\backslash B)=\psi(S)+1-\val_S(p) \, .
\end{equation}
Combining \eqref{eq:1}, \eqref{eq:2}, and \eqref{eq:3}, we obtain
\[
\begin{aligned}
\Error(D,S)&\geq\Error(D,S\backslash B)+1\\
&=  \deg(D|_{S\backslash B}) - \psi(S\backslash B) +1 \\
&= \big(\deg(D|_S) -D(p)\big) - \big(\psi(S)+1-\val_S(p)\big)+1\\
&=  \Error(D,S)-D(p)+\val_S(p) \, ,
\end{aligned}
\]
from which we deduce that $\val_S(p) \leq D(p)$.

\medskip

(a) Note that $\epsilon = \dist\left(S,V_{\Gamma} \backslash S\right)$ is well-defined because $S$ and $V_{\Gamma} \backslash S$ are nonempty.
For any $p \in S$, we have $D_1(p) = D(p) - \val_S(p)$. So the effectiveness of $D_1$ follows directly from Claim~1.

Let $\UU$ denote the the set of all closed edge segments $e$ of length $\epsilon$ with endpoints $\partial e = \{e^-, e^+\}$ such that $e^- \in S$ and $\dist(e^+, S) = \epsilon$. `Firing' $S$ to distance $\epsilon$ has the effect of sending one chip from $e^-$ to $e^+$ for each $e \in \UU$. So, for any admissible subset $R \subseteq \Gamma$ we have
\begin{equation}\label{eq:degs}
\deg(D_1|_{R}) = \deg(D|_{R})- 
|\{e\in \UU\colon e^{-}\in R, e^+\notin R\}|
+|\{e\in \UU\colon e^{-}\not\in R, e^+\in R\}|  \, .
\end{equation}
Let $R = \minmax(D_1)$. By Lemma~\ref{lem:Err}~(a) and \eqref{eq:degs} we have:
\begin{equation}\label{eq:RS}
\begin{aligned}
&\Error(D,S\cap R)+\Error(D,\conv(S\cup R)) \geq
\Error(D,S)+\Error(D,R)+\diff(S,R)\\
 & \geq \Error(D,S)+\Error(D_1,R)+\diff(S,R)
 -|\{e\in \UU \colon e^-\notin R, e^+\in R\}| \ ,
 \end{aligned}
\end{equation}
with equality only if $\{e\in \UU\colon e^-\in R, e^+\notin R\}=\emptyset$. 

\medskip
\noindent{\bf Claim 2.} $|\{e\in \UU\colon e^-\notin R, e^+\in R \}|\leq \diff(S,R)$.

\medskip
\noindent{\em Proof of Claim 2.}
For any $e\in \UU$ with  $e^-\notin R$ and $e^+\in R$ we have $e\backslash \{e^-\}\nsubseteq R$ because $R$ is closed. Since $R$ is also convex (Lemma \ref{l:properties of max error sets}~(a)), $e$ contains a unique connected component of the complement of $S\cup R$ in its interior. This component is an open edge segment which has one endpoint in $S\backslash R$ and one endpoint in $R\backslash S$. It therefore contributes with $1$ to $\diff(S,R)$, proving the claim.

\medskip

By \eqref{eq:RS}, Claim 2, and the fact that $\Error(D,\conv(S\cup R)) \leq \ME(D)$ (see proof of Lemma~\ref{l:properties of max error sets}~(b)), we obtain: 
\[
\begin{aligned}
2\ME(D) &\geq\Error(D,S\cap R)+\Error(D,\conv(S\cup R)) \\
&\geq \Error(D,S)+ \Error(D_1,R) \\
&=\ME(D)+\ME(D_1) \ .
\end{aligned}
\]
It follows that $\ME(D_1)\leq \ME(D)$. In case of equality, $R$ is also a $D$-max error set and thus, by Lemma~\ref{l:properties of max error sets}~(b), so is $S\cap R$. Because $S = \minmax(D)$, we must have $S\subseteq R$. And since \eqref{eq:RS} is an equality, we have $\{e\in \UU\colon e^-\in R, e^+\notin R\} = \emptyset$. Therefore, $R$ must contain all points of $\Gamma$ that have distance $\epsilon$ to $S$. By the convexity of $R$ (Lemma \ref{l:properties of max error sets}~(a)), it follows that $R$ does in fact contain all points of distance at most $\epsilon$ to $S$.

\medskip

(b) It follows directly from Claim 1 that $D_2$ is also effective. 
Let $Q= \minmax(D_2)$. We have four cases: 

\begin{enumerate}

\item[{\em Case 1:}] $e^-,e^+\in Q$. We have 
\[\ME(D_2)=\Error(D_2,Q)=\Error(D,Q)\leq \ME(D)\, .\] 
In case of equality $Q$ is $D$-max error set and thus contains both $e^+$ and $S$.
\item[{\em Case 2:}] $e^-,e^+\notin Q$. As $Q$ does not contain $S$, it is not a $D$-max error set. Therefore,
\[\ME(D_2)=\Error(D_2,Q)=\Error(D,Q)<\ME(D) \, .\]
\item[{\em Case 3:}] $e^-\in Q$, $e^+\notin Q$. We have 
\[\ME(D_2)=\Error(D_2,Q)=\Error(D,Q)-1<\ME(D)\, .\]
\item[{\em Case 4:}] $e^-\notin Q$, $e^+\in Q$. We have $\ME(D_2) = \Error(D_2,Q)=\Error(D,Q)+1$. Since $Q$ is closed, the open edge segment $e$ is not contained in $S\cup Q$. It follows that $e$ contains a connected component of $\Gamma\backslash (S\cup Q)$. As such an edge segment is automatically an open edge segment with endpoints in $S\cup Q$, we have $\diff(S,Q)>0$ and hence 
\[
\begin{aligned}
\Error(D,S\cap Q)+\Error(D,\conv(S\cup Q))&\geq\Error(D,S)+\Error(D,Q)+e(S,Q)\\
&\geq\ME(D)+\ME(D_2)-1+1 \, .
\end{aligned}
\]

From this, and the fact $\Error(D,\conv(S\cup Q)) \leq \ME(D)$ (see proof of Lemma~\ref{l:properties of max error sets}~(b)), we conclude
\[
2\ME(D)\geq \Error(D,S\cap Q)+\Error(D,\conv(S\cup Q)) \geq \ME(D)+\ME(D_2) \ .
\]
It follows that $\ME(D_2)\leq \ME(D)$. Equality is not possible in this case because $Q$ does not contain $S$.
\end{enumerate}
\end{proof}

\section{Semibreak divisors in effective divisor classes} \label{sec:main}

\subsection{Existence of semibreak divisors}
We are now ready to prove our main theorem about the existence of semibreak divisors in effective classes.

\begin{Theorem}\label{thm:repr_with_below_break}
 Let $\Gamma$ be a metric graph of genus $g$. Let $D \in \Div^d(\Gamma)$, with $0 \leq d \leq g$, and assume $|D|\neq \emptyset$. Then $|D|$ contains a semibreak divisor.
 \end{Theorem}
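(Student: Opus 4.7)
The plan is to produce, starting from an arbitrary effective $D_0 \in |D|$, an effective divisor $D' \sim D$ together with a break divisor $B \geq D'$; this $D'$ will be the desired semibreak representative in $|D|$. In view of Lemma~\ref{lem:Err}(c), which identifies break divisors as precisely the effective degree-$g$ divisors with $\ME = 0$, the construction proceeds in two phases: first drive the max error of an effective representative down to zero, then grow the degree from $d$ up to $g$ while preserving this.

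For the first phase, whenever the current effective representative satisfies $\ME>0$ I would apply Proposition~\ref{prop:comparing error after firing}(a) with $S = \minmax(D_0)$, firing to distance $\epsilon = \dist(S, V_\Gamma \setminus S)$. This $\epsilon$ is positive because $\minmax$ is always a proper subset of $\Gamma$ (Remark~\ref{rmk:MEzero}) and, being convex (Lemma~\ref{l:properties of max error sets}(a)), must omit at least one branch point. The output is effective by part~(i) and has no larger max error by part~(ii); and if $\ME$ is preserved, part~(iii) forces $\minmax$ to strictly enlarge, absorbing at least one previously excluded branch point. Iterating, either $\ME$ strictly decreases or the number of branch points contained in $\minmax$ strictly increases; since $\ME$ is a bounded nonnegative integer and $|V_\Gamma|$ is finite (and $\minmax$ cannot absorb all of $V_\Gamma$, lest it equal $\Gamma$ by convexity), the process terminates at an effective $D' \in |D|$ with $\ME(D') = 0$.

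The second phase rests on the following sub-claim: \emph{if $E$ is effective with $\ME(E) = 0$ and $\deg(E) < g$, there is $p \in \Gamma$ with $E + (p)$ effective and $\ME(E + (p)) = 0$}. Let $\mathcal{T}$ denote the collection of proper closed admissible $S$ with $\Error(E, S) = 0$. By Lemma~\ref{lem:Err}(a), for $S_1, S_2 \in \mathcal{T}$ we have $\Error(E, S_1 \cap S_2) + \Error(E, S_1 \cup S_2) = 0$; the intersection term is $\leq 0$ (being a proper closed admissible) and the union term is $\leq 0$ whenever $S_1 \cup S_2 \neq \Gamma$, forcing both to vanish and placing $S_1 \cup S_2$ in $\mathcal{T}$. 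When $\deg(E) \leq g-2$, the value $\Error(E, \Gamma) = \deg(E) - g + 1 \leq -1$ rules out $S_1 \cup S_2 = \Gamma$; together with the combinatorial finiteness of minimal elements of $\mathcal{T}$ afforded by Proposition~\ref{prop:minmax}, this produces a unique maximum $S^* \in \mathcal{T}$, which is proper, and any $p \in \Gamma \setminus S^*$ satisfies the sub-claim. Iterating the sub-claim starting from $D'$ produces a break divisor $B$ that dominates $D'$.

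The main obstacle is the boundary case $\deg(E) = g-1$, where $\Error(E, \Gamma) = 0$ and the family $\mathcal{T}$ may exactly cover $\Gamma$, so no direct chip placement keeps $\ME = 0$. My proposed remedy is an ``add then correct'' procedure: pick a branch point $v$ on the boundary $\partial S$ of a minimal $S \in \mathcal{T}$, set $F = E + (v)$, and while $\ME(F) > 0$ repeatedly apply Proposition~\ref{prop:comparing error after firing}(b) with $e^- = v$ (or the current location of the added chip) to push the chip outwards along an edge exiting $\minmax(F)$. By part~(iii) of that proposition, each such move strictly decreases $\ME$ or strictly shrinks $V_\Gamma \setminus \minmax$, so a lexicographic induction terminates. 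The invariant to maintain is that every displaced chip is a descendant of the initially added chip, so $F' \geq E$ persists throughout and the final $F'$ realises the inductive step. The delicate verifications are that a suitable branch point $v$ on $\partial S$ always exists and that the sequence of chip moves can be orchestrated so it never consumes any chip originally in $\supp(E)$; both hinge on the structural description of minimal max error sets in Proposition~\ref{prop:minmax} together with careful bookkeeping in parts~(ii) and~(iii) of Proposition~\ref{prop:comparing error after firing}.
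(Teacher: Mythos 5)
Your Phase~1 is correct and is essentially one half of the paper's argument: iterating Proposition~\ref{prop:comparing error after firing}(a) on $\minmax$ drives the max error down, with termination by the lexicographic decrease of $(\ME, |V_\Gamma \setminus \minmax|)$. The fatal problem is the two-phase decomposition itself: you first lock in a representative $D'\sim D$ with $\ME(D')=0$ and then try to grow it to a break divisor by only \emph{adding} chips. But a divisor $D'$ of degree $d<g$ with $\ME(D')=0$ need not be dominated by any break divisor, so Phase~2's sub-claim is false in general and no amount of bookkeeping in the ``add then correct'' remedy can repair it.

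Here is a concrete counterexample. Let $\Gamma$ be the dumbbell graph of genus $g=2$: two vertices $u,v$, a loop $e_1$ at $u$, a loop $e_3$ at $v$, and a bridge $e_2$ joining $u$ to $v$. Let $w$ be the midpoint of the bridge and take $D=(w)$, so $d=g-1=1$. One checks directly that $\ME((w))=0$ (e.g.\ $\psi(\{w\})=1$, and every $S\ni w$ has $\psi(S)\geq 1$), so your Phase~1 outputs $D'=(w)$ unchanged. However, every break divisor on $\Gamma$ has the form $(p_1)+(p_2)$ with $p_1\in\overline{e}_1$ and $p_2\in\overline{e}_3$ (both loops must be broken, and breaking the bridge never lowers genus), and $w$ lies in neither $\overline{e}_1$ nor $\overline{e}_3$. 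Hence $(w)$ is \emph{not} a semibreak divisor, and no chip can be added to it to produce a break divisor. Concretely, the family $\mathcal{T}$ of proper closed admissible $S$ with $\Error((w),S)=0$ contains both $\overline{e}_1\cup[u,w]$ and $[w,v]\cup\overline{e}_3$, whose union is all of $\Gamma$; so for every $p'\in\Gamma$ some $S\in\mathcal{T}$ contains $p'$ and gives $\Error((w)+(p'),S)=1$. Your proposed remedy also fails to launch here: the minimal nonempty $S\in\mathcal{T}$ is $\{w\}$, whose boundary contains no branch point. (The theorem is of course still true: $(u)\in|(w)|$ is a semibreak divisor, obtained by sliding the chip along the bridge.)

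The paper avoids this trap by never splitting the problem into two phases. It fixes an auxiliary effective $E$ of degree $g-d$ at the outset and runs the error-reduction procedure on the degree-$g$ divisor $D+E$, carefully distinguishing which chips belong to $D$ and which to $E$: if $\supp(E)\cap\partial\minmax(D+E)\neq\emptyset$ it moves an $E$-chip via Proposition~\ref{prop:comparing error after firing}(b), and otherwise it fires $\minmax(D+E)$ via part~(a), which only moves $D$-chips because no $E$-chip sits on the boundary. This way the $D$-part is allowed to change (remaining linearly equivalent to $D$) at the same time as the $E$-part, which is exactly what is needed in the dumbbell example to move $(w)$ to $(u)$. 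The lesson is that the choice of which $D'\in|D|$ will ultimately be the semibreak representative cannot be made before one knows where the extra $g-d$ chips will end up; the two choices must be made simultaneously.
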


\begin{proof}
The result is straightforward for metric circles so, as before, we will assume $\Gamma$ is not homeomorphic to a circle. 
Since $|D|\neq \emptyset$, we may also assume that $D$ is effective. 

Let $E$ be any effective divisor of degree $g-d$. If $\ME(D+E)=0$, we are done by Lemma~\ref{lem:Err}~(c). If $\ME(D+E)>0$, let $S= \minmax(D+E)$. We will show how to construct a pair $(D',E')$ of effective divisors such that
\begin{itemize}
\item[-] $D'\sim D$ and 
\item[-] either $\ME(D'+E')<\ME(D+E)$, or $\ME(D'+E')=\ME(D+E)$ and $ \minmax(D'+E')$ contains more branch points of $\Gamma$ than $S$. 
\end{itemize}
Since $\Gamma$ has only finitely many branch points, and a convex subset of $\Gamma$ containing all its branch points must be equal to $\Gamma$, this will prove the theorem. We consider two cases:
\begin{itemize}[leftmargin=*]
\item[(1)] $\supp(E)\cap \,  \partial S\neq \emptyset$. Then there exists an open edge segment $e \subseteq \Gamma \backslash S$ with endpoints $e^-\in \supp(E)\cap \,  \partial S$ and $e^+ \in V_\Gamma \backslash S$.
Set $D'=D$ and $E'=E-(e^-)+(e^+)$. Both $D'$ and $E'$ are effective by construction, and $D'\sim D$. By Proposition~\ref{prop:comparing error after firing} (b), we have $\ME(D'+E')\leq \ME(D+E)$, and if there is equality, then $\minmax\left(D'+E'\right)$ contains more branch points of $\Gamma$ than $S$.

	\item[(2)] $\supp(E)\cap \,  \partial S =\emptyset$. Let $\epsilon = \dist\left(S,V_{\Gamma} \backslash S\right)$, and consider the divisor obtained from $D+E$ by `firing' $S$ to distance $\epsilon$, i.e. $D+E+\div(\phi_{S,\epsilon})$ (see Remark~\ref{rmk:chipfiring}). By Proposition~\ref{prop:comparing error after firing}~(a), $D+E+\div(\phi_{S,\epsilon})$ is effective. Since $\partial S\cap \,  \supp(E)=\emptyset$, this implies that $D'=D+\div(\phi_{S,\epsilon})$ is also effective. Let $E'=E$.
Then, again by Proposition~\ref{prop:comparing error after firing} (a), we have $\ME(D'+E')\leq \ME(D+E)$, and if there is equality, then $\minmax\left(D'+E'\right)$ contains more branch points of $\Gamma$ than $S$.

\end{itemize}
\end{proof}

\subsection{Uniqueness issues}
The existence of semibreak divisors (Theorem~\ref{thm:repr_with_below_break}) is sufficient for the applications considered in \S\ref{sec:generic}. However, it is natural to wonder about uniqueness of such representatives. 

By a {\em cut} $C$ in a metric graph $\Gamma$ we mean a disjoint union of open edge segments that disconnects $\Gamma$. The {\em size} of a cut $C$, denote by $\size(C)$, is the number of connected components (maximal open edge segments) of $C$. If $S\subseteq \Gamma$ is a closed admissible subset then, for sufficiently small $\epsilon>0$, the set $\{ x \in \Gamma\backslash S : \dist(x,S)<\epsilon\}$ forms a cut. We say that such a cut is determined by $S$. The size of a cut determined by $S$ does not depend on any choices, and will be denoted by $c(S)$.

We start with two useful lemmas.
\begin{Lemma}
\label{lem:cut}
Let $\Gamma$ be a metric graph, and let $S\subseteq \Gamma$ be a closed admissible set. Then
\[
c(S) = \psi(S)-p_a(S)+1 \, .
\]

\end{Lemma}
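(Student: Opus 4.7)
The plan is to reduce the identity to a statement about Euler characteristics, then apply Mayer–Vietoris to an $\epsilon$-neighborhood of $S$. Unpacking the definitions $p_a(X)=1-\chi(X)$ and $\psi(S)=p_a(\Gamma)-p_a(\Gamma\backslash S)$, the claim $c(S)=\psi(S)-p_a(S)+1$ is equivalent to
\[
c(S) \;=\; \chi(S)+\chi(\Gamma\backslash S)-\chi(\Gamma),
\]
so I would work with this equivalent form throughout.

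For sufficiently small $\epsilon>0$, let $U_\epsilon=\{x\in\Gamma:\dist(x,S)<\epsilon\}$. By the definition of a cut determined by $S$ (given just before the lemma statement), $U_\epsilon\backslash S$ is a disjoint union of exactly $c(S)$ open edge segments, each contractible; hence $\chi(U_\epsilon\backslash S)=c(S)$. Using a compatible model for the admissible set $S$ and the local star-shaped structure of $\Gamma$, I would verify that, for $\epsilon$ smaller than the distances from $S$ to the relevant vertices outside $S$, the inclusion $S\hookrightarrow U_\epsilon$ is a deformation retract (retracting each of the finitely many attached half-open edge segments onto the boundary point of $S$ at its base). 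Thus $\chi(U_\epsilon)=\chi(S)$.

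Now $\Gamma=U_\epsilon\cup(\Gamma\backslash S)$ is an open cover (both sets are open since $S$ is closed), with intersection $U_\epsilon\cap(\Gamma\backslash S)=U_\epsilon\backslash S$. Applying the Mayer–Vietoris form of inclusion–exclusion recorded in Remark~\ref{rmk:topology}~(iii),
\[
\chi(\Gamma) \;=\; \chi(U_\epsilon)+\chi(\Gamma\backslash S)-\chi(U_\epsilon\backslash S) \;=\; \chi(S)+\chi(\Gamma\backslash S)-c(S),
\]
which rearranges to the desired identity and then converts back to $c(S)=\psi(S)-p_a(S)+1$.

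The one step requiring mild care is the deformation retraction $U_\epsilon\simeq S$: one must choose $\epsilon$ small enough that no branch point of $\Gamma$ lies in $U_\epsilon\backslash S$ and that the thickening only adds half-open edge segments anchored at $\partial S$, but this is immediate by working with a model compatible with $S$ (Remark~\ref{rmk:admis}~(i)) and taking $\epsilon$ less than the minimal edge length of that model. Everything else is a direct computation, so I do not expect any serious obstacle.
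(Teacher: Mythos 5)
Your proof is correct and is essentially identical to the paper's: your $U_\epsilon$ is the paper's $S'=S\cup C$, your $U_\epsilon\backslash S$ is the paper's cut $C$, and both arguments apply the same Mayer--Vietoris computation after using the deformation retraction $S\hookrightarrow S'$. The only difference is cosmetic (you spell out the retraction argument in slightly more detail).
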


\begin{proof}
Let $C$ be a cut determined by $S$. Because all components of $C$ are open edge segments we have 
\begin{equation}\label{eq:cut1}
\chi(C) = c(S) \,.
\end{equation} 
Let $S'=S\cup C$. Then $S$ is a deformation retract of the open and admissible set $S'$. Therefore
\begin{equation}\label{eq:cut2}
\chi(S)=\chi(S') \, .
\end{equation}
Applying the Mayer-Vietoris sequence (see Remark~\ref{rmk:topology}~(ii)) to the pair $(S',\Gamma\backslash S)$ yields
\begin{equation}\label{eq:cut3}
\chi(C)=\chi(S')+\chi(\Gamma\backslash S)-\chi(\Gamma) \, .
\end{equation}
The result follows from \eqref{eq:cut1}, \eqref{eq:cut2}, and \eqref{eq:cut3}.
\end{proof}

The following result is a generalized version of the `maximum principle' (see e.g. \cite[Lemma 3.7]{BS13})
\begin{Lemma}\label{lem:maxprin}
Let $\Gamma$ be a metric graph and $\phi\in R(\Gamma)$. Let $S$ be the subset of $\Gamma$ where $\phi$ attains its minimum. Then 
\begin{itemize}
\item[(a)] $S$ is closed and admissible. 
\item[(b)] For any $p \in S$ we have
$-\div(\phi)(p) \geq \val_S(p)$.
\end{itemize}
\end{Lemma}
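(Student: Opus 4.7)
The plan is to exploit two facts: that $\phi$ is piecewise affine with integer slopes, and that being in the minimum locus $S$ forces all ``outgoing'' behaviour of $\phi$ at points of $S$ to be of a controlled sign. Part (a) is essentially topological bookkeeping, while part (b) is a direct slope computation using the definition of $\div(\phi)$.

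For part (a), first note $S$ is closed because $\phi$ is continuous and $S = \phi^{-1}(\min \phi)$. For admissibility, I would refine the vertex set $V_\Gamma$ by adding the finitely many points where $\phi$ changes slope (the definition of $R(\Gamma)$ guarantees finitely many such points per closed edge), obtaining a model $G$ on which $\phi$ is affine on every closed edge of $G$. On each such edge, an affine function attains its minimum on a set that is either empty, a single endpoint, both endpoints, or the entire edge. Consequently $S$ is a finite union of vertices and open edges of $G$, and in particular has finitely many path components, which is the definition of admissibility (see Remark~\ref{rmk:admis}~(i)).

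For part (b), unwinding the definition of $\div$ gives
\[
-\div(\phi)(p) \;=\; \sum_{v} \mathrm{slope}_v(\phi)(p)\,,
\]
where the sum runs over all tangent directions $v$ emanating from $p$ and $\mathrm{slope}_v(\phi)(p)$ denotes the outgoing slope at $p$ in direction $v$. I would then split the directions at $p$ into two types. If $v$ points into $S$, then a small initial segment along $v$ lies in $S$, hence $\phi$ is constant equal to $\min\phi$ on it, so $\mathrm{slope}_v(\phi)(p)=0$. If $v$ points out of $S$ (contributing to $\val_S(p)$), then $\phi$ is strictly greater than $\min\phi$ on a small initial segment along $v$; since $\phi$ is affine with integer slope on this segment, the slope must be a positive integer, i.e.\ at least $1$. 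Summing over all directions yields
\[
-\div(\phi)(p) \;\geq\; 1\cdot \val_S(p) + 0\cdot(\text{remaining directions}) \;=\; \val_S(p)\,,
\]
as desired.

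There is no real obstacle here; the only subtlety is remembering to refine the model to a fine enough one so that $\phi$ is affine on each edge (using the finiteness-of-slope-changes condition built into the definition of $R(\Gamma)$). Once that is in place, both parts reduce to inspecting the behaviour of an affine function on a single edge.
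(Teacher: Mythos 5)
Your proof is correct and follows essentially the same route as the paper: part (a) from continuity plus the finite-slope-change property (equivalently, refining to a model where $\phi$ is edge-affine), and part (b) by splitting the tangent directions at $p$ into those pointing into $S$ (slope $0$) and those pointing out of $S$ (integer slope $\geq 1$). Your writeup is a bit more explicit in spelling out the integrality step, but the underlying argument is the same.
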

\begin{proof}
Part (a) follows from the fact that $\phi$ is continuous, and only changes its slope finitely many times on each closed edge. 
For part (b), note that if $p, q \in S$ then $\phi(p)=\phi(q)$ whereas if $p \in S$ but $q \in S_\epsilon \backslash S$ (for a sufficiently small $\epsilon$), then the outgoing slope of $\phi$ from $p$ to $q$ is at least $1$. Therefore $-\div(\phi)(p) \geq \val_S(p)$.
\end{proof}
It is known that there is a unique break divisor representative in any equivalence class of divisors in degree $g$ (\cite{MZ, ABKS}). Here we give a new proof of this fact, which is better suited for the study of semibreak divisors.
\begin{Proposition}\label{prop:uniq}
Let $\Gamma$ be a metric graph of genus $g$. If $D, D' \in \Div^g(\Gamma)$ are two distinct break divisors then $D\not\sim D'$.
\end{Proposition}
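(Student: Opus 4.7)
The plan is to argue by contradiction, using the maximum principle (Lemma~\ref{lem:maxprin}), the cut-size formula of Lemma~\ref{lem:cut}, and both characterizations of break divisors from Proposition~\ref{prop:break_char}. Suppose $D\sim D'$, so that $D-D'=\div(\phi)$ for some $\phi\in R(\Gamma)$; since $D\ne D'$ the function $\phi$ is non-constant. Let $S$ be the set on which $\phi$ attains its minimum. By Lemma~\ref{lem:maxprin}(a), $S$ is closed and admissible, and since $\phi$ is non-constant, $S\subsetneq\Gamma$.

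The first step extracts a lower bound on $\deg(D'|_S)-\deg(D|_S)$. Lemma~\ref{lem:maxprin}(b) gives
\[
D'(p)-D(p)\ =\ -\div(\phi)(p)\ \geq\ \val_S(p) \qquad \text{for all } p\in S.
\]
Summing over $p\in S$, and using that $\sum_{p\in S}\val_S(p)=c(S)$ (each connected component of a cut determined by $S$ is a short open edge-segment attached to a single point of $\partial S$, where it contributes $1$ to $\val_S$) together with Lemma~\ref{lem:cut}, I obtain
\[
\deg(D'|_S)-\deg(D|_S)\ \geq\ c(S)\ =\ \psi(S)-p_a(S)+1.
\]

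The second step exploits that both $D$ and $D'$ are break divisors. Applying Proposition~\ref{prop:break_char}(iii) to $D'$ on the proper closed admissible set $S$ gives $\deg(D'|_S)\leq\psi(S)$, which combined with the previous display forces $\deg(D|_S)\leq p_a(S)-1$. For the matching lower bound I cannot apply Proposition~\ref{prop:break_char}(ii) to the closed set $S$ directly; instead I pass to the open admissible ``fattening''
\[
S'\ =\ \{x\in\Gamma : \phi(x)<\min\phi+\epsilon\}
\]
for a small $\epsilon>0$. Because $\phi$ has integer outgoing slopes at least $1$ at every point of $\partial S$ and changes slope only finitely often on each edge, for sufficiently small $\epsilon$ the set $S'$ is obtained from $S$ by attaching a short half-open edge-segment along each outgoing direction at $\partial S$; such a set deformation retracts onto $S$, so $p_a(S')=p_a(S)$. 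For $\epsilon$ small enough these added segments also avoid $\supp(D)$, so $\deg(D|_{S'})=\deg(D|_S)$. Proposition~\ref{prop:break_char}(ii) then yields $\deg(D|_S)=\deg(D|_{S'})\geq p_a(S')=p_a(S)$, contradicting $\deg(D|_S)\leq p_a(S)-1$.

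The main obstacle is this last open-fattening step: I need to check carefully that for all sufficiently small $\epsilon$ the set $S'$ both deformation retracts onto $S$ and satisfies $\deg(D|_{S'})=\deg(D|_S)$. Both rely on the PL-integer-slope structure of $\phi$ and the finiteness of $\supp(D)$, and together they allow the ``closed-set'' inequality coming from the maximum principle to be matched against the ``open-set'' inequality characterising the break divisor $D$.
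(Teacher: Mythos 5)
Your proof is correct and takes essentially the same approach as the paper: set up $\phi$ relating $D$ and $D'$, take the minimum set $S$ of $\phi$, combine the maximum principle (Lemma~\ref{lem:maxprin}) and the cut-size formula (Lemma~\ref{lem:cut}) with the break-divisor characterization (Proposition~\ref{prop:break_char}), and pass to an open fattening of $S$ to invoke the open-set inequality, reaching a contradiction. The only organizational difference is your choice of sign for $\phi$ (so your $S$ is the paper's maximum set of their $\phi$), which makes you apply the open-fattening and Proposition~\ref{prop:break_char}(ii) to $D$ and the closed-set inequality (iii) to $D'$ rather than vice versa, and your fattening is a sublevel set $\{\phi < \min\phi + \epsilon\}$ instead of the paper's distance-based small cut $C$ chosen to avoid $\supp(D)\cup\supp(\div\phi)$, which sidesteps the (correct, but slightly more delicate) verification you flag about stray components and deformation retraction.
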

\begin{proof}
	Suppose, for a contradiction, that there exist two distinct linearly equivalent break divisors $D$ and $D'$. Then $D'=D+\div(\phi)$ for some $\phi\in R(\Gamma)$. Let $S$ be the closed admissible subset of $\Gamma$ where $\phi$ attains its minimum, and let $C$ be a sufficiently small cut determined by $S$ such that 
	\begin{equation} \label{eq:cutchoice}
	C \cap \supp(D)=\emptyset \quad , \quad  C\cap\supp(\div(\phi))=\emptyset \, .
	\end{equation}
		
	Let $S'=S\cup C$. Then $S$ is a deformation retract of the admissible open set $S'$, and in particular $p_a(S')=p_a(S)$. By \eqref{eq:cutchoice} and Proposition \ref{prop:break_char} we obtain 
\[
\deg(D'|_S)=\deg(D'|_{S'})\geq p_a(S')=p_a(S) \, .
\]
By the definition of $S$ and Lemma~\ref{lem:maxprin} we have
\[
-\deg(\div(\phi)|_S) \geq c(S) \, .
\]
Together with Lemma~\ref{lem:cut}, we conclude:
\[
\deg(D|_S)=\deg(D'|_S)-\deg(\div(\phi)|_S)\geq p_a(S)+\psi(S)-p_a(S)+1\geq \psi(S)+1 \, ,
\]
This implies, by Proposition~\ref{prop:break_char} (iii), that $D$ cannot be a break divisor, which is a contradiction.
\end{proof}

The above argument does not guarantee the uniqueness of semibreak representatives even for degree $g-1$. Notice that indeed, an effective divisor class can have more than one semibreak divisor (see Figure \ref{fig:equivalent}). 
\begin{figure}[H]
\begin{tikzpicture}[scale=.75]
\draw[black, -](0,0)--(1,0);
\draw(-1,0) circle (1);
\draw(1.7,0) circle (.7);
\fill[black] (0,0) circle (.1);
\fill[black] (1,0) circle (.1);
\node at (-.3,0){$u$};
\node at (1.3,0){$v$};
\end{tikzpicture}
\caption{\label{fig:equivalent} Linearly equivalent semibreak divisors: $(u) \sim (v)$.}
\end{figure}
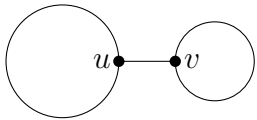 

By a slight modification of our proof of Proposition~\ref{prop:uniq}, we obtain the following sufficient condition for the uniqueness of semibreak divisors in their equivalence classes. 

\begin{Proposition}\label{prop:distinctsemibreak}
Let $\Gamma$ be a metric graph of genus $g$. Fix $0 \leq d \leq g$. Assume for each cut $C$ of $\Gamma$ we have $\size(C) \geq d+1$.
If $D, D' \in \Div^d(\Gamma)$ are two distinct semibreak divisors then $D\not\sim D'$.
\end{Proposition}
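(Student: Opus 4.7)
The plan is to revisit the argument given for Proposition~\ref{prop:uniq}, but to replace the use of Proposition~\ref{prop:break_char}(ii) (which provided the lower bound $\deg(D'|_{S'}) \ge p_a(S')$ for break divisors) by the weaker estimate $\deg(D'|_S) \ge 0$, valid for any effective divisor. The slack introduced by this weaker bound will be compensated for precisely by the edge-connectivity hypothesis $c(S) \ge d+1$.

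Concretely, I would argue by contradiction. Suppose $D \neq D'$ are two linearly equivalent semibreak divisors in $\Div^d(\Gamma)$, so that $D' = D + \div(\phi)$ for some non-constant $\phi \in R(\Gamma)$. Let $S \subseteq \Gamma$ be the locus on which $\phi$ attains its minimum. By Lemma~\ref{lem:maxprin}(a), $S$ is closed and admissible, and since $\phi$ is non-constant on compact $\Gamma$, $S$ is a nonempty proper subset. The collar cut determined by $S$ therefore has a well-defined size $c(S)$, which is $\ge d+1$ by hypothesis.

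The key estimate will be $-\deg(\div(\phi)|_S) \ge c(S)$. This should follow from Lemma~\ref{lem:maxprin}(b) by summing $-\div(\phi)(p) \ge \val_S(p)$ over $p \in S$ and using $\sum_{p \in S} \val_S(p) = c(S)$ (each maximal open edge segment in a sufficiently small collar cut of $S$ corresponds to exactly one outgoing direction at some point of $\partial S$). Combined with the effectiveness of $D'$, i.e., $\deg(D'|_S) \ge 0$, this would yield
\[
\deg(D|_S) \;=\; \deg(D'|_S) - \deg(\div(\phi)|_S) \;\ge\; c(S) \;\ge\; d+1.
\]
On the other hand, $D$ is effective of total degree $d$, so $\deg(D|_S) \le d$, a contradiction.

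The only step I expect requires any real care is the identity $\sum_{p\in S}\val_S(p) = c(S)$, which is a routine bookkeeping exercise once the definitions are unravelled. It is perhaps worth noting that the semibreak hypothesis on $D$ and $D'$ enters only through their effectiveness; the actual work in this proposition is being done entirely by the cut-size assumption, which is why no appeal to Proposition~\ref{prop:break_char} is needed.
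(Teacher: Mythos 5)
Your proposal is correct and is essentially the same as the paper's own proof: contradiction via the minimum set $S$ of $\phi$, Lemma~\ref{lem:maxprin} to get $-\deg(\div(\phi)|_S)\ge c(S)$, and the chain $d\ge\deg(D|_S)\ge c(S)\ge d+1$. Your observation that only effectiveness of $D$ and $D'$ is used is also noted in the paper's remark immediately following the proposition.
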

\begin{proof}
	Suppose, for the sake of contradiction, that there exist two linearly equivalent semibreak divisors $D$ and $D'$ of degree $d$. Then $D'=D+\div(\phi)$ for some $\phi\in R(\Gamma)$. Let $S$ be the closed admissible subset of $\Gamma$ where $\phi$ attains its minimum. By the definition of $S$ and Lemma~\ref{lem:maxprin} we have
\[
-\deg(\div(\phi)|_S) \geq c(S) \, .
\]
	As $D'$ is a semibreak divisor, it is effective, thus 
	\[\deg(D|_S)+\deg(\div(\phi)|_S) = \deg(D'|_S)\geq 0\, .\]
	 Consequently, 
	\[ d \geq \deg(D|_S) \geq -\deg(\div(\phi)|_S) \geq c(S) \geq d+1 \, ,\] 
	which is a contradiction.
\end{proof}

\begin{Remark}
Proposition~\ref{prop:distinctsemibreak} should be compared with \cite[Theorem 1.8]{BN07}. In fact, in the proof, we only use the fact that $D$ and $D'$ are effective. In other words, we are precisely proving the metric graph analogue of \cite[Theorem 1.8]{BN07}.
\end{Remark}

The following result will be useful later, and its proof is a slight modification of the previous proof.
\begin{Lemma}\label{lem:semibreak_equiv_cond}
Let $\Gamma$ be a metric graph, and let $D$ and $D'$ be distinct effective divisors on $\Gamma$ with $D\sim D'$. Then there exists a closed admissible subset $S$ of $\Gamma$ such that for every $p\in S$ we have $D(p)\geq \val_S(p)$.
\end{Lemma}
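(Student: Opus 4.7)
The plan is to mimic the argument used in the proofs of Proposition~\ref{prop:uniq} and Proposition~\ref{prop:distinctsemibreak}, extracting the key identity rather than combining it with a break/semibreak characterization. Since $D \sim D'$, we can write $D' = D + \div(\phi)$ for some $\phi \in R(\Gamma)$. Because $D \ne D'$, the function $\phi$ is non-constant.

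Next, I would let $S$ be the closed admissible subset of $\Gamma$ on which $\phi$ attains its minimum. By Lemma~\ref{lem:maxprin}(a), this $S$ is indeed closed and admissible, and since $\phi$ is non-constant, $S$ is a proper subset of $\Gamma$ (so that $\partial S \ne \emptyset$ and the condition $D(p) \geq \val_S(p)$ is nontrivial at boundary points).

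Now the crucial local step: by Lemma~\ref{lem:maxprin}(b), for every $p \in S$ we have
\[
-\div(\phi)(p) \;\geq\; \val_S(p).
\]
Combining this with the identity $D(p) = D'(p) - \div(\phi)(p)$ and the fact that $D'$ is effective, we obtain for every $p \in S$,
\[
D(p) \;=\; D'(p) - \div(\phi)(p) \;\geq\; 0 + \val_S(p) \;=\; \val_S(p),
\]
which is exactly the desired inequality.

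There is essentially no obstacle here: the lemma is a direct consequence of the maximum principle (Lemma~\ref{lem:maxprin}) together with the effectiveness of $D'$. The only subtlety worth noting is that non-constancy of $\phi$ (guaranteed by $D \ne D'$) ensures $S \ne \Gamma$, which is what makes the conclusion substantive on $\partial S$; everywhere in the interior of $S$ the inequality reduces to the trivial statement $D(p) \geq 0$.
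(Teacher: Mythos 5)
Your proof is correct and follows essentially the same route as the paper's: write $D' = D + \div(\phi)$, take $S$ to be the minimum locus of $\phi$, and combine Lemma~\ref{lem:maxprin} with the effectiveness of $D'$. The extra observation that $D \neq D'$ forces $\phi$ non-constant (hence $S \subsetneq \Gamma$) is a nice clarifying remark that the paper leaves implicit.
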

\begin{proof}
By assumption, we have $D'=D+\div(\phi)$ for some $\phi\in R(\Gamma)$. Let $S$ be the closed admissible subset of $\Gamma$ where $\phi$ attains its minimum. By the definition of $S$ and Lemma~\ref{lem:maxprin}, for any $p \in S$ we have
\[
-\div(\phi)(p) \geq \val_S(p) \, .
\]
As $D'$ is effective, it follows that
	\[D(p)+\div(\phi)(p) = D'(p)\geq 0\, .\]
	 Consequently, 
	\[ D(p) \geq -\div(\phi)(p) \geq \val_S(p) \, .\] 

\end{proof}

\subsection{Integral semibreak divisors}

Let $G = (V(G), E(G))$ be a finite (unweighted) graph. Let $\Gamma$ be a metric graph of genus $g$, obtained from $G$ by turning each edge to an edge segment of length $1$. Let $\Div^d(G)$ denote those elements of $\Div^d(\Gamma)$ that are supported on $V(G)$. We will refer to such divisors as {\em integral}. 
Let $|D|_G$ denote the set of all effective integral divisors linearly equivalent to $D$. It is known that, for an integral divisor $D$, we have $|D|_G \ne \emptyset$ if and only if $|D| \ne \emptyset$ \cite{HKN,luoye}.

The following result implies that there is an entirely integral version of Theorem~\ref{thm:repr_with_below_break}. 
\begin{Proposition}\label{prop:integral}
Let $D \in \Div^d(G)$, with $0 \leq d \leq g$, and assume $|D|_G\neq \emptyset$. Then there exists an integral semibreak divisor in $|D|$. Moreover, each semibreak divisor in $|D|$ is integral. 

\end{Proposition}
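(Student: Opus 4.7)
The argument splits into the existence statement and the ``moreover'' statement.

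For the existence of an integral semibreak in $|D|$, I would adapt the algorithmic proof of Theorem~\ref{thm:repr_with_below_break}, starting with the integral $D$ and with an integral auxiliary divisor $E \in \Div_+^{g-d}(G)$ (for instance $E = (g-d)(v_0)$ for some $v_0 \in V(G)$). The two operations used in that proof both preserve integrality when every edge of $\Gamma$ has unit length: in Case~(1) the endpoints $e^-$ and $e^+$ of the moved chip of $E$ both lie in $V(G)$ (the first because $e^- \in \supp(E) \subseteq V(G)$, the second because $e^+\in V_\Gamma \subseteq V(G)$); and in Case~(2) the firing distance $\epsilon = \dist(S, V_\Gamma \setminus S)$ is a distance between two vertices of the unit-edge graph $G$, hence a positive integer, so the chips redistributed by firing $S$ to distance $\epsilon$ all land at vertices of $G$. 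The termination argument of Theorem~\ref{thm:repr_with_below_break} then yields an integral semibreak divisor representing $[D]$.

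For the ``moreover'' statement, let $D' \in |D|$ be an arbitrary semibreak divisor, pick a break divisor $B$ with $D' \leq B$, and set $F := B - D' \in \Div_+^{g-d}(\Gamma)$, so that $B \sim D + F$. The plan is to show that $B$ is integral: once this is established, the containment $\supp(D') \subseteq \supp(B) \subseteq V(G)$ forces $D'$ itself to be integral. Recall that the degree-$g$ case of the proposition is the classical theorem of \cite{MZ} and \cite{ABKS} that the unique break divisor in a degree-$g$ class admitting an integral representative is itself integral. Combined with the uniqueness of the break divisor in its class (Proposition~\ref{prop:uniq}), it therefore suffices to exhibit an integral effective divisor in the class $[D+F]$.

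To produce such a divisor, I would replace $F$ by an integral effective divisor $F^\ast \in \Div_+^{g-d}(G)$ with the property that $D' + F^\ast$ is again a break divisor. Then $D + F^\ast \sim D' + F^\ast$ is an integral effective divisor of degree $g$ in $[D+F]$, so the degree-$g$ case identifies $D' + F^\ast$ (the unique break in its class) as integral. The existence of $F^\ast$ is a submodular feasibility question: by the characterization of break divisors in Proposition~\ref{prop:break_char}(iii), one needs $\deg(F^\ast|_S) \leq \psi(S) - \deg(D'|_S)$ for every closed admissible $S \subsetneq \Gamma$. The right-hand side is integer-valued (by Definition~\ref{def:genus} and the fact that divisor coefficients are integers), nonnegative (since $D'$ is semibreak, by the easy direction of Proposition~\ref{prop:break_char}(iii) applied to any break divisor dominating $D'$), and a submodular function of $S$ by Lemma~\ref{l:f_is_submod}; the original $F = B - D'$ provides a real-valued feasible point, so standard polymatroid theory yields an integral feasible solution supported on $V(G)$.

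The main obstacle is this last step: translating real feasibility to integer feasibility on the vertex set $V(G)$ in the continuous metric-graph
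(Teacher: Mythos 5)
Your existence argument mirrors the paper's: take the auxiliary divisor $E$ supported on $V(G)$ and run the algorithm from the proof of Theorem~\ref{thm:repr_with_below_break}; both moves preserve integrality because $\partial\minmax(D+E)\subseteq\supp(D+E)\subseteq V(G)$, so the firing distance $\epsilon=\dist(S,V_\Gamma\setminus S)$ is a positive integer.

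For the ``moreover'' part your route is genuinely different, and the obstacle you flag at the end is a real gap, not a routine invocation of polymatroid integrality. The divisor $F=B-D'$ is not a fractional point of a polytope in $\RR_{\geq 0}^{V(G)}$: its support can lie at interior points of edges, and---more seriously---before the claim is proved $D'$ itself may have such support, so the bound $\psi(S)-\deg(D'|_S)$ is not a function of the finite index $S\cap V(G)$ and there is no finite ground set on which to set up the polymatroid. Converting the continuous certificate $F$ into an integral one supported on $V(G)$ is itself a chip-moving problem of essentially the same flavour as the existence part, but with $D'$ frozen, which the paper's construction does not provide; so the main technical content is deferred rather than removed. (Also, the degree-$g$ input is just the existence part plus Proposition~\ref{prop:uniq}, and ``$D+F^\ast\in[D+F]$'' should read ``$\in[D+F^\ast]$''.) The paper avoids all of this with a short direct contradiction: a non-integral semibreak $D''\in|D|$ has a coefficient-one chip at some interior point $p$ of a unit edge $uv$; write $D''=D'+\div(\phi)$ with $D'$ the integral semibreak from the existence step and normalize $\phi(u)\in\ZZ$. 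Integral slopes and the slope change of $\phi$ at the non-vertex $p$ force $\phi(v)\notin\ZZ$, so $S=\{w\in V(G):\phi(w)\in\ZZ\}$ is a proper nonempty subset of $V(G)$, and every edge of the cut it determines must carry an interior chip of $D''$---which is impossible for a semibreak divisor.
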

\begin{proof}
The existence part follows from the proof of Theorem \ref{thm:repr_with_below_break}: we can choose $E$ such that $\supp(E)\subseteq V(G)$. Then $D+E$ is still integral. To obtain our semibreak divisor, we successively apply steps (1) or (2). It is enough to show that in the above case these both give integral break divisors. For step (1), this is trivial. For step (2), notice that for an integral divisor $D+E$, the set $\partial\minmax\left(D+E\right)$ is contained in $\supp(D+E)$, and hence is contained in $V(G)$. It follows that  $\epsilon = \dist\left(S,V_{\Gamma} \backslash S\right)$ is an integer and hence that $D'$ and $E'$ will still be integral divisors, proving the existence part of the statement.

Let us suppose for a contradiction that there also exists a non-integral break divisor $D''$ in $|D|$. This means that there exists $p\notin V(G)$ such that $D''(p)>0$, which implies that $D''(p)=1$, as $D''$ is a semibreak divisor. Let us suppose that the two vertices of the edge of $p$ are $u$ and $v$.
Take some $\phi\in R(\Gamma)$ such that $D''=D'+\div(\phi)$. Without loss of generality we can assume that $\phi(u)=k$ is an integer. Then $D''(p)=1$ implies that the slope of the segment between $u$ and $p$ and the slope of the segment between $p$ and $v$ differ by one. Let us suppose that the slope of the segment between $u$ and $p$ is $t\in \mathbb{Z}$, and let $\dist(u,p)=a$ (which is not an integer). Then $\phi(p)=k+at$ and $\phi(v)=k+at+(1-a)(t-1)=k+t-1+a$. Hence $\phi(v)$ is not an integer. Let $S=\{w\in V(G): \phi(w)\in\mathbb{Z}\}$. Then $S\subsetneq V(G)$. Let $C$ be the cut determined by $S$, i.e., the set of edges of $G$ where exactly one endpoint is from $S$. As $S$ is a proper subset of $V(G)$ and $G$ is connected, $C$ is nonempty. If $\div(\phi)$ is constant zero on an edge of $G$, then the value of $\phi$ on the two endpoints differ by an integer, since the slopes are integer and the length of each edge is one. Hence each edge of the cut $C$ needs to have an interior point $q$ where $\div(\phi)(q)\neq 0$. As $D'$ is integral, this means that $\div(\phi)(q)> 0$ on each of these points, and thus $D''$ has positive number of chips in the interior of each edge of $C$, contradicting the fact that $D''$ is a semibreak divisor.
\end{proof}

\begin{Remark}\label{rmk:partialorien}
It follows from Proposition~\ref{prop:integral} that there is an entirely finite graph version of the theory of semibreak divisors. For example, any effective divisor of degree at most the genus on a finite graph $G$ is linearly equivalent to some semibreak divisor.
One could directly use a combinatorial analogue of our constructions to prove this (slightly weaker) result. Alternatively, one can give a completely combinatorial proof using the theory of orientations on graphs as described in \cite{ABKS} and \cite{backman} (see loc. cit. for the notation and terminology): let $D\in\Div^d_+(G)$ with $0\leq d\leq g$. Pick a point $q \in \Gamma$. By \cite[Theorem 5.7]{backman} we have  $D - (q) \sim D_{\mathcal{O}}$ for some `$q$-connected partial orientation' $\mathcal{O}$. For any unoriented edge in $\mathcal{O}$, pick an arbitrary orientation to obtain a (full) $q$-connected orientation $\mathcal{O}'$. By construction $D_{\mathcal{O}} \leq D_{\mathcal{O}'}$. Let $E = D_{\mathcal{O}'} + (q)$. This is a break divisor by \cite[Lemma 3.3]{ABKS}. Let $D' = D_{\mathcal{O}} + (q)$ which is dominated by $E$.
\end{Remark}

\section{Computational aspects}\label{sec:compute}

In this section we show that for an effective divisor, we can find a linearly equivalent semibreak divisor in polynomial time if the input data can be given by rational numbers.

Suppose that the metric graph $\Gamma$ has rational edge lengths. We call a divisor $D$ rational, if for all $p$ with $D(p)\neq 0$, the distance of $p$ from each branch point is rational.

We will encode a rational number $a/b$ by the pair $(a,b)$, where $a$ and $b$ are integers, but they need not be relatively prime.
We encode a metric graph in the following way: For each branch point we write down the list of edges incident to it, along with the edge lengths. We encode a point of an edge by writing down which edge it is on, and what is its distance from one of the endpoints. We encode a divisor $D$ by writing down $D(p)$ along with the encoding of $p$ for each point $p\in\Gamma$ such that $D(p)\neq 0$. We only work with effective divisors of degree at most $g$, hence we can suppose that $D(p)\leq g$ for each $p$. We might need more space for encoding a divisor than for encoding the graph if the distance of some $p$ with $D(p)\neq 0$ has a large denominator. However, as the numbers $D(p)$ are at most $g$, this is the only factor that can make the code of a divisor large.
We will need to encode closed convex sets. Let the spanning set of a closed convex set $S$ be the following:
$$\spset(S)=\bigcup_{e\in E_\Gamma}\{\partial(e\cap S)\},$$
where $\partial\emptyset=\emptyset$.
Then $\spset(S)$ is a finite set of points, and $S=\conv(\spset(S))$. We encode $S$ by giving the points of $\spset(S)$.

\begin{Remark}
The encoding of a metric graph outlined above has polynomial size in the genus plus the number of bits needed for writing down the edge lengths.
A more succinct encoding of metric graphs would be to only write down how many edges of each length connect two given vertices. However, to write down divisors we need to be able to distinguish between edges connecting the same pair of branch points, hence we need the first type of encoding.

In the case of discrete graphs (i.e., in the model where edge lengths are one and we are not allowed to place chips on the interior points of edges), the model of only encoding edge multiplicities is more natural. Using this more succinct encoding, computational problems are potentially more difficult. In particular, in the integral setting, deciding whether $|D|\neq \emptyset$ can be done in polynomial time in the less succinct model (\cite{BS13}), but it is open whether it can be done in polynomial time in the succinct model.
\end{Remark}

\begin{Theorem} \label{thm:compute}
If $\Gamma$ is a metric graph with rational edge lengths, and $D$ is an effective rational divisor, then a semibreak divisor linearly equivalent to $D$ can be found in polynomial time.
\end{Theorem}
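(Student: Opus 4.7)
The plan is to algorithmically execute the existence proof of Theorem~\ref{thm:repr_with_below_break}. Starting from the effective rational divisor $D$, I would pick any effective rational divisor $E$ of degree $g-d$ supported on $V_\Gamma$ and then iterate the following loop: compute $S := \minmax(D+E)$ and the value $\ME(D+E)$; if $\ME(D+E) = 0$, stop and output $D$ (then $D+E$ is a break divisor by Lemma~\ref{lem:Err}~(c), so $D$ itself is semibreak); otherwise apply the appropriate reduction from the proof of Theorem~\ref{thm:repr_with_below_break}, either moving a chip of $E$ from $\supp(E)\cap\partial S$ to an adjacent branch point outside $S$, or replacing $D$ by $D+\div(\phi_{S,\epsilon})$ with $\epsilon = \dist(S, V_\Gamma\setminus S)$. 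The polynomial-time claim then splits into three parts: (a) computing $S$ in polynomial time, (b) bounding the number of iterations, and (c) keeping the numerical data rational and of polynomial bit-size.

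Part~(a) is the technical heart of the argument. Let $G$ be the model of $\Gamma$ whose vertex set is $V_\Gamma \cup \supp(D+E)$. By Proposition~\ref{prop:minmax}, any $D$-max error set, and in particular $\minmax(D+E)$, has the form $S(I,J):= I \cup \bigcup_{e\in J}\bar e$ for $I\subseteq V(G)$, $J\subseteq E(G)$ subject to $\partial\bar e \subseteq I$ for every $e\in J$. These valid parametrizations form a finite distributive lattice under componentwise $\cup$ and $\cap$, and a direct check shows that these lattice operations agree with $\cup$ and $\cap$ on the associated closed admissible subsets of $\Gamma$. By the equality part of Lemma~\ref{lem:Err}~(a), the function $(I,J)\mapsto \Error(D+E, S(I,J))$ is therefore \emph{modular} on this lattice. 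Since by construction $\supp(D+E)\subseteq V(G)$, the degree contribution $\deg((D+E)|_{S(I,J)})=\sum_{v\in I}(D+E)(v)$ is actually linear in $I$, so the problem reduces to maximizing a modular function on a distributive lattice subject to the monotone inclusion constraint $e\in J \Rightarrow \partial\bar e\subseteq I$. This is a classical project-selection type problem, solvable in polynomial time by a reduction to minimum cut, or by invoking the submodular-optimization algorithms of \cite{Schrijver_submod, IFF_submod}. Once $\ME(D+E)$ is known, the unique smallest max-error set $\minmax(D+E)$ is extracted by a greedy loop: attempt to delete each vertex and each edge from a candidate max-error set, and keep those deletions that preserve the maximum error.

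For part~(b), Proposition~\ref{prop:comparing error after firing} implies that each iteration either strictly decreases $\ME$, or leaves it unchanged while $\minmax$ strictly enlarges in a way that acquires at least one new branch point of $\Gamma$. Since a convex admissible subset of $\Gamma$ containing every branch point equals $\Gamma$, the second alternative can persist for at most $|V_\Gamma|-1$ consecutive steps before $\ME$ must strictly drop; as $\ME$ is a non-negative integer bounded above by $g$, the algorithm terminates after $O(g\cdot|V_\Gamma|)$ iterations. Part~(c) is routine: $\epsilon$ is a minimum of distances with rational, polynomially-bounded denominators, so the chip displacements in both reduction steps preserve rationality without blowing up the bit-complexity, and $|\supp(D+E)|$ stays bounded by $g+|V_\Gamma|$ throughout the execution.

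The hard part will be (a): correctly identifying the right finite search space via Proposition~\ref{prop:minmax}, verifying that $\Error$ is modular on the lattice of valid parametrizations, and packaging the monotone inclusion constraint into a standard combinatorial optimization instance. Once (a) is in place, parts (b) and (c) are essentially bookkeeping layered on top of the proof of Theorem~\ref{thm:repr_with_below_break}.
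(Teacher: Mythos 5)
Your overall strategy matches the paper's: wrap the existence proof of Theorem~\ref{thm:repr_with_below_break} in a loop, show $\minmax(D+E)$ is computable, bound the number of iterations by $O(g\cdot|V_\Gamma|)$, and track the bit-complexity of the rational data. Parts~(b) and~(c) of your argument are essentially the same as in the paper.

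For part~(a) you take a genuinely different route, which is worth comparing. The paper defines $f\colon 2^{V(G)}\to\RR$ by $f(X)=-\Error\bigl(D+E,\conv_G(X)\bigr)$, proves $f$ is \emph{sub}modular via the inequality part of Lemma~\ref{lem:Err}(a) (the one involving $\conv$ and $\diff$), and then applies black-box submodular minimization of \cite{Schrijver_submod,IFF_submod}. You instead parametrize the candidate sets directly as down-sets $(I,J)$ of the poset in which $e>v$ for $v\in\partial\bar e$, verify that lattice-theoretic $\cup,\cap$ on valid pairs agree with set-theoretic $\cup,\cap$ on the associated closed admissible subsets (this check is correct, since the prerequisite constraint forces $\partial\bar e\subseteq I_1\cap I_2$ whenever $e\in J_1\cap J_2$), and then invoke the \emph{equality} part of Lemma~\ref{lem:Err}(a) to deduce that $\Error$ is \emph{modular} on this distributive lattice. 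Maximizing a modular function over a ring family is exactly the maximum-weight-closure/project-selection problem and reduces to a single min-cut. This is a more refined observation than the paper's, and it yields a faster and more combinatorial algorithm than generic submodular-function minimization. (As a side remark, the modular weights have a clean closed form: $w(e)=1$ for every edge $e$, and $w(v)=(D+E)(v)-\val(v)+1$ for every vertex $v$.)

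There is, however, a genuine gap in the step where you extract $\minmax(D+E)$. The greedy deletion loop you describe — repeatedly delete a single element (together with its up-closure, so the result is still a valid down-set) if doing so preserves the maximum error — can terminate at a max-error set that strictly contains $\minmax(D+E)$. Modularity only guarantees that $\sum_{y\in S\setminus\minmax}w(y)=0$ for a max-error down-set $S\supsetneq\minmax$; it does not guarantee that any single greedy step slices off a zero-sum piece. A concrete obstruction: take $\minmax=\{a\}$ with $w(a)=\ME>0$, two further vertices $b_1,b_2$ with $w(b_1)=w(b_2)=-1$, and two parallel edges $e_1,e_2$ between $b_1$ and $b_2$ (each with $w(e_i)=1$). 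Then $\{a,b_1,b_2,e_1,e_2\}$ is a max-error set, but deleting any single edge drops the error by $1$, and deleting $b_1$ or $b_2$ (together with both incident $e_i$) also drops the error by $1$; the greedy is stuck. The paper avoids this by minimizing the perturbed submodular function $f'(X)=f(X)+|X|/(2|V(G)|)$, whose unique minimizer is automatically inclusion-minimal. In your setting the cleanest fix is analogous: for each ground element $x$, solve the restricted max-error problem over down-sets excluding $x$, and put $x\in\minmax(D+E)$ exactly when the restricted maximum is strictly less than $\ME(D+E)$. This costs only $|V(G)|+|E(G)|$ extra min-cut calls and closes the gap. (Alternatively, after computing a max-flow, the canonical minimum cut — the set of vertices reachable from the source in the residual graph — is automatically the inclusion-minimal one; if you use that, the greedy loop becomes unnecessary.)

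One smaller imprecision: you write that by Proposition~\ref{prop:minmax} ``any $D$-max error set'' has the form $S(I,J)$. The proposition shows only that $\minmax(D+E)$ does, and that every max-error set contains one of that form. For the algorithm this is harmless — restricting the search to $S(I,J)$-shaped sets is without loss of generality — but the justification should be phrased accordingly.
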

\begin{proof}
The trivial case of $\Gamma$ being a circle can once again be excluded.

We need to be able to do the procedure in Theorem \ref{thm:repr_with_below_break} algorithmically:
at the first step, we can choose $E=(g-d)\cdot p$ where $d=\deg(D)$ and $p$ is an arbitrary branch point of $\Gamma$. Then $D+E$ is still rational, and the largest denominator in the encoding did not grow.

The next step is to find $\ME(D+E)$ and $S=\minmax(D+E)$. We address this issue later. If we have $\minmax(D+E)$, we have to decide whether $\supp(E)\cap \partial S\neq \emptyset$. This can be done since $\partial S$ has at most $|E_\Gamma|$ points. If $\supp(E)\cap \partial S\neq \emptyset$, the operations of case (1) can be trivially done in polynomial time and the resulting $D$ and $E$ are rational. Moreover, the largest denominator in the encoding does not grow. If $\supp(E)\cap \partial S= \emptyset$, then we need to find $\epsilon = \dist(S,V_\Gamma\setminus S)$. For this, we need to check distances along polynomially many edges. Note that $\partial S$ is contained in $\supp (D+E)$, hence all the distances between $\partial S$ and $V_\Gamma \setminus S$ are rational. Thus, $\epsilon$ is also rational, and so are the updates $D'$ and $E'$. The effect of the firing can also be computed in polynomial time, and as we only add or subtract distances, the largest denominator in the encoding of $D+E$ does not grow.

We need to update the divisors $D$ and $E$ polynomially many times (i.e., the loop in the proof of Theorem \ref{thm:repr_with_below_break} is executed polynomially many times): after any update, $\ME(D+E)$ does not increase, and if it does not decrease, then the number of branch points in $\minmax(D+E)$ increases. At the beginning, $\ME(D+E)$ is at most $g$, as $\deg(D|_S)\leq \deg(D)\leq g$, and $\psi(S)\geq 0$. Hence there are at most $g \cdot |V_\Gamma|$ updates.

It is left to show that one can find $\ME(D+E)$ and $\minmax(D+E)$ in polynomial time. For any divisor $D+E$ and convex set $S$, $\Error(D+E,S)$ can be computed in polynomial time, hence it is enough to find $\minmax(D+E)$, and then $\ME(D+E)=\Error(D+E,\minmax(D+E))$.

For finding $\minmax(D+E)$, we will use submodular minimization.
For a finite set $A$, a set function $f:2^{A}\to \mathbb{R}$ is called submodular if 
\[f(X\cap Y)+f(X\cup Y)\leq f(X)+f(Y),\]
 for all subsets $X$ and $Y$ of $A$. It is known \cite{Schrijver_submod, IFF_submod} (see also \cite[Chapter 45]{Schrijver_book}) that, if $f$ takes rational values and for any set $X\subseteq A$ the value of $f(X)$ can be computed in polynomial time, then a set minimizing $f$ can be found in polynomial time. Schrijver and Iwata--Fleischer--Fujishige give combinatorial strongly polynomial algorithms \cite{Schrijver_submod, IFF_submod} that achieve this goal.

By Lemma \ref{lem:Err} (a), for two closed convex sets $S_1$ and $S_2$, 
\[\Error(D+E,S_1\cap S_2) + \Error(D+E,\conv(S_1\cup S_2))\geq \Error(D+E,S_1) + \Error(D+E,S_2).\]
 This essentially means that $(-1)\cdot\Error(D+E,.)$ is a submodular set function, and we need to find a smallest minimizing set for it. Though $\Error$ is defined for infinitely many sets, we show how to turn the problem into a finite setting, and then we can apply a submodular minimization algorithm.

Let us take a more refined model $G$ of $\Gamma$, where $V(G)=V_\Gamma \cup \supp(D+E)$. Then $|V(G)|$ is still polynomial in the input size.
By Proposition \ref{prop:minmax}, $\minmax(D+E)$ is a convex set which is the union of vertices and closed edges of $G$, hence it is enough to look for $\minmax(D+E)$ among these sets. We can introduce $\conv_G$, as the convex hull with respect to the model $G$, i.e., $\conv_G(S)$ is the union of $S$ and all closed edge segments of the model $G$ where both endpoints are from $S$.

For a set $S$ which is the union of vertices and closed edges of $G$, let $\spset_G(S)=S\cap V(G)$. Notice that if $S$ is convex in $G$, and it is the union of vertices and closed edges of $G$, then $S=\conv_G(\spset_G(S))$.
Also, if $S_1\subseteq S_2$, then $\conv_G(S_1)\subseteq \conv_G(S_2)$.

Let us define the set function 
\[
f\colon 2^{V(G)}\to \mathbb{R},\;\; X\mapsto (-1)\cdot\Error(D+E,\conv_G(X)) \, .
\]
By the argument above, $\minmax(D+E)=\conv(X)$ for the smallest $f$-minimizing set $X$. We claim that $f$ is submodular. This follows from the variant of Lemma \ref{l:comparison with convex hull}, where we replace $\conv$ with $\conv_G$ (the proof is completely analogous) and the elementary fact that 
\begin{align*}
\conv_G(X\cup Y)&=\conv_G(\conv_G(X)\cup \conv_G(Y)) \quad \text{, and}\\
\conv_G(X\cap Y)&=\conv_G(X)\cap \conv_G(Y)
\end{align*}
for all subsets $X$ and $Y$ of $V(G)$.

A submodular minimization algorithm only gives us a minimizing set, and not necessarily a smallest one. To solve this problem, take $f'(X)=f(X)+\frac{|X|}{2|V(G)|}$. Then only the fractional part of $f'(X)$ depends on the additional term, hence a set minimizing $f'$ is a minimizing set for $f$, and an inclusion-minimal (and hence smallest by Proposition \ref{prop:minmax}) among those. It is easy to check that $f'$ is also submodular, and the values of $f'$ are still computable in polynomial time.
\end{proof}

\section{Generic effective divisor classes}
\label{sec:generic}

\subsection{Effective loci}
Let $\Gamma$ be a metric graph of genus $g$. 
Recall, for $D \in \Div(\Gamma)$, its {\em rank} $r(D)$ is defined by the properties that $r(D) = -1$ if $|D| = \emptyset$, and $r(D) \geq s \geq 0$ if for all $E \in \Div_+^s(\Gamma)$ we have $|D-E| \ne \emptyset$. Clearly, the rank of a divisor $D$ only depends on its linear equivalence class $[D]$.

The {\em tropical Riemann-Roch theorem} of \cite{GK, MZ} (see also \cite{BN07}) states that 
\[r(D) - r(K-D) = \deg(D) - g + 1\, ,\] 
where $K = \sum_{p \in \Gamma}{(\val(p)-2)(p)}$.

The \emph{tropical Jacobian} of $\Gamma$ can be defined as the $g$-dimensional real torus
\[
\Jac(\Gamma) = H_1(\Gamma, \RR) / H_1(\Gamma, \ZZ) \, .
\]
For each choice of a base point $q\in \Gamma$, there is a natural, continuous map $\Phi_q\colon\Gamma\to \Jac(\Gamma)$ sending $q$ to $0$, the \emph{Abel-Jacobi map} (cf.\ \cite{MZ, baker2011metric}). This map is piecewise linear, which means that locally on $\Gamma$ it factors through a piecewise linear map to the vector space $H_1(\Gamma,\RR)$. As $\Jac(\Gamma)$ is a topological group, we may use its addition and the map $\Phi_q$ to define maps 
\[
\Phi_q^{(d)}\colon \Div^d(\Gamma)\to \Jac(\Gamma) 
\]
for $d\geq 0$. Of course, the composite $\Gamma^d\to \Div^d(\Gamma)\to \Jac(\Gamma)$ will still be piecewise linear. As $\Gamma^d$ is compact and $\Jac(\Gamma)$ is Hausdorff, it follows from {\em closed map lemma} (see e.g. \cite[Lemma A.52]{Lee}) that $\Gamma^d \to \Jac(\Gamma)$ is a closed map. In particular, the \emph{effective locus} $\widetilde W_d= \Phi_q^{(d)}(\Div^d_+(\Gamma))$ is a closed polyhedral subset of $\Jac(\Gamma)$. It follows from the tropical Riemann-Roch theorem that, for $d \geq g$,  we have $\widetilde W_d = \Jac(\Gamma)$.

If we denote
\[
\Pic^d(\Gamma) = \Div^d(\Gamma) / \Prin(\Gamma)\, ,
\] 
then it is the content of the tropical Abel-Jacobi theorem that $\Phi_q^{(d)}$ factors through the natural map $S^{(d)} \colon \Div^d(\Gamma) \rightarrow \Pic^d(\Gamma)$, and that the induced morphism $\Pic^d(\Gamma)\to \Jac(\Gamma)$ is a bijection (\cite{MZ}). We endow  $\Pic^d(\Gamma)$ the topology inherited from this bijection. Under this bijection, the effective locus $\widetilde W_d$ corresponds to the locus of {\em effective divisors classes} $W_d$, i.e. those divisors classes $[D]\in \Pic^d(\Gamma)$ in degree $d$ such that $|D| \ne \emptyset$.

\subsection{Generic semibreak divisors in effective loci}
We are interested in {\em generic} properties of $W_d$, i.e. properties that hold on a dense open subset of $W_d$.

Let $0\leq d\leq g$. The set $\Div^d_+(\Gamma)$ is endowed with the quotient topology coming from its identification with $\Gamma^d$ modulo the action of the symmetric group $\mathfrak{S}_d$. 

Let $\SB_d\subseteq \Div^d_+(\Gamma)$ denote the set of all semibreak divisors of degree $d$. Its preimage in $\Gamma^d$ is the union of all sets of the form $\overline e_1\times \cdots \times\overline e_d$, where $e_1,\ldots, e_g$ are distinct open edges of $\Gamma$ such that $\Gamma\backslash \bigcup _{i=1}^g e_i$ is connected. Therefore, the set $\SB_d$ is {\em closed} in $\Div^d_+(\Gamma)$. 
Let $\Sigma_d$ denote the interior of $\SB_d$. The preimage of $\Sigma_d$ in $\Gamma^d$ is the union of all sets of the form $e_1\times \cdots \times e_d$, with $e_1,\ldots, e_g$ as above. In particular, $\Sigma_d$ is {\em open} in $\SB_d$.

\begin{Lemma} \label{lem:inj}
For any $D \in \Sigma_d$, we have $|D| = \{D\}$ and $r(D) = 0$.
\end{Lemma}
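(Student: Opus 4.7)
My plan is to first reduce the claim $r(D)=0$ to the stronger statement $|D|=\{D\}$. Once the latter is known, one picks any point $p\in \Gamma\setminus\supp(D)$ (such a point exists because $\supp(D)$ is finite and $\Gamma$ is not): if $|D-(p)|\ne\emptyset$, there is an effective $F$ with $F+(p)\sim D$, so $F+(p)\in|D|=\{D\}$, forcing $p\in\supp(D)$, a contradiction. Hence $r(D)=0$, provided $|D|=\{D\}$.

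To establish $|D|=\{D\}$, I would argue by contradiction. Suppose there is an effective $D'\in|D|$ with $D'\ne D$. Lemma~\ref{lem:semibreak_equiv_cond} then yields a closed admissible subset $S\subseteq\Gamma$ with $D(p)\ge\val_S(p)$ for every $p\in S$. Inspecting its proof, $S$ is the minimum locus of some $\phi\in R(\Gamma)$ with $D'=D+\div(\phi)$; as $D'\ne D$, $\phi$ is non-constant, so $S\subsetneq\Gamma$.

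Next I exploit the structure of $D\in \Sigma_d$: one can write $D=(p_1)+\cdots+(p_d)$ with the $p_i$ lying in the interiors of distinct open edges $e_1,\ldots,e_d$ of some model of $\Gamma$, extendable to distinct open edges $e_1,\ldots,e_g$ such that $T:=\Gamma\setminus\bigcup_{i=1}^g e_i$ is connected. Since $D(p)\le 1$ everywhere, the inequality forces $\val_S(p)\le 1$ on $S$, so $\partial S\subseteq\{p_1,\ldots,p_d\}$ with $\val_S=1$ at each boundary point. Because each $p_i$ lies in the \emph{interior} of $e_i$, none of them is in $T$; thus $\partial S\cap T=\emptyset$, which means every point of $S\cap T$ is in the topological interior of $S$. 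Hence $S\cap T$ is both open and closed in $T$, and connectedness of $T$ gives either $T\subseteq S$ or $S\cap T=\emptyset$.

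The conclusion comes from a per-edge analysis in each alternative, which I expect to be the main technical step. If $T\subseteq S$, then for each $j>d$ the set $S\cap\bar e_j$ has no boundary inside $\bar e_j$ and contains both endpoints, hence equals $\bar e_j$; for each $i\le d$, any connected component of $\bar e_i\setminus S$ would be an open subinterval of $e_i$ with two distinct endpoints both in $\{p_i\}$, which is impossible, so $\bar e_i\subseteq S$. Altogether $S=\Gamma$, contradicting $S\subsetneq\Gamma$. If instead $S\cap T=\emptyset$, then admissibility of $S$ together with closedness in $\Gamma$ forces $S\cap\bar e_i$ to be a finite subset of $\{p_i\}$ (any genuine closed segment strictly inside $e_i$ would need two distinct boundary points there), so $S$ consists of finitely many isolated points of the form $p_i$; but at such an isolated point in the interior of an edge, $\val_S(p_i)=2$, contradicting $\val_S\le 1$. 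Both alternatives being impossible, we conclude $|D|=\{D\}$, and hence $r(D)=0$ as above. The delicate point is the local rigidity on each open edge, where one must invoke both admissibility of $S$ and the fact that a proper arc strictly inside an open edge cannot be closed in $\Gamma$ unless it picks up endpoints belonging to the spanning tree $T$.
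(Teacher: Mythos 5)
Your proof is correct, and the reduction of $r(D)=0$ to $|D|=\{D\}$ matches the paper exactly. For the core claim $|D|=\{D\}$, however, you take a genuinely different route. The paper applies Lemma~\ref{lem:semibreak_equiv_cond} and then, for an arbitrary closed admissible proper $S$, argues via the cut $C$ determined by $S$: since $\Gamma\backslash C$ is disconnected, there should exist a component $e$ of $C$ whose closure $\overline e$ carries no chip, giving a point $p\in\overline e\cap S$ with $D(p)=0<\val_S(p)$. You instead start from the submodularity-free fact that $D\leq 1$ everywhere, so the witness set $S$ from Lemma~\ref{lem:semibreak_equiv_cond} must have $\val_S\leq 1$ on $S$ and $\partial S\subseteq\{p_1,\dots,p_d\}$, and then you run a clean clopen argument on the connected spanning tree $T=\Gamma\backslash\bigcup_{i=1}^g e_i$ to split into $T\subseteq S$ (forcing $S=\Gamma$) versus $S\cap T=\emptyset$ (forcing $S\subseteq\{p_1,\dots,p_d\}$, which has $\val_S=2$ at each of its points). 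This is a complete and self-contained argument. Notably, it transparently handles the case where every component of the cut has a chip in its closure — for instance $S=\{p_1\}$, where both cut components touch $p_1$ — a situation in which the paper's intermediate assertion about a chip-free $\overline e$ does not literally hold, even though the final inequality $D(p_1)<\val_S(p_1)$ does. Your finiteness-of-$S$ case supplies exactly the missing argument there. The one cosmetic point worth tightening: you should say explicitly that $S\neq\emptyset$ (it is the minimum locus of a continuous function on a compact space), since your Case~2 contradiction via $\val_S(p_i)=2$ needs a point of $S$ to exist; and you should take the $e_i$ to be open edges of the minimal model $V_\Gamma$, as in the paper's definition of $\Sigma_d$, rather than of ``some model.''
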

\begin{proof}
Let $S$ be a closed admissible subset of $\Gamma$, and let $C$ be a cut determined by $S$. Because $\Gamma\backslash C$ is disconnected, there exists a component $e$ of $C$ such that $D(p)=0$ for all $p\in \overline e$. In particular, for the unique point $p\in\overline e \cap S$ we have $D(p)<\val_S(p)$. The statement $|D| = \{D\}$ now follows from Lemma~\ref{lem:semibreak_equiv_cond}.
If $q \not\in \supp(D)$, it follows from $|D| =\{D\}$ that we must have $|D-(q)| = \emptyset$ and therefore $r(D) = 0$.
\end{proof}

\begin{Remark}
One can alternatively show, using the burning algorithm (see e.g. \cite{luoye}), that any $D \in \Sigma_d$ is $D$ is {\em universally reduced} (i.e. $q$-reduced for all $q \in \Gamma$) which is equivalent to having $|D| = \{D\}$ (see \cite[Lemma~4.19]{ABKS}).
\end{Remark}

\begin{Theorem}
\label{thm:birational}
Let $\Gamma$ be a metric graph of genus $g$, and fix $0 \leq d \leq g$.  
\begin{itemize}
\item[(a)] The tropical Abel-Jacobi map $S^{(d)} \colon \Div^d_+(\Gamma) \rightarrow \Pic^d(\Gamma)$ is `birational' onto its image. More precisely, there exists an open dense subset $U_d \subseteq  W_d$  such that the induced map $(S^{(d)})^{-1}(U_d)\to U_d$ is a homeomorphism. 
\item[(b)] $W_d \subseteq \Pic^d(\Gamma)$ is of pure dimension $d$. 
\end{itemize}
\end{Theorem}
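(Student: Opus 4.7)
The natural candidate for the open dense subset is $U_d := S^{(d)}(\Sigma_d)$, where $\Sigma_d$ is the interior of $\SB_d$ introduced just above. The plan rests on three observations. First, I will invoke Lemma~\ref{lem:inj} --- which says that divisors in $\Sigma_d$ are their own complete linear system --- to conclude $(S^{(d)})^{-1}(U_d) = \Sigma_d$: any divisor mapping into $U_d$ must be linearly equivalent to some $D \in \Sigma_d$, but that class is just $\{D\}$. In particular, $S^{(d)}|_{\Sigma_d} \colon \Sigma_d \to U_d$ is a continuous bijection. For density I will use that $\Sigma_d$ is dense in the compact set $\SB_d$ (each open cell is dense in its closed cell, and $\SB_d$ is a finite union of such closed cells), together with $W_d = S^{(d)}(\SB_d)$ from Theorem~\ref{thm:repr_with_below_break}; compactness of $\SB_d$ and Hausdorffness of $\Pic^d(\Gamma)$ then yield $W_d = S^{(d)}(\overline{\Sigma_d}) \subseteq \overline{U_d}$.

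The main step --- which will yield both openness of $U_d$ in $W_d$ and continuity of the inverse at once --- will be a sequential compactness argument. Given any $[D_n]\in W_d$ with $[D_n] \to [D_0]$ for some $D_0\in \Sigma_d$, I will pick semibreak representatives $D_n \in \SB_d$ using Theorem~\ref{thm:repr_with_below_break}. Compactness of $\SB_d$ lets one pass to a convergent subsequence $D_{n_k} \to D_\infty \in \SB_d$; continuity of $S^{(d)}$ forces $[D_\infty] = [D_0]$, and Lemma~\ref{lem:inj} then pins down $D_\infty = D_0$. Applied to every subsequence, this shows the full sequence $D_n \to D_0$ in $\SB_d$, and since $\Sigma_d$ is open in $\SB_d$ one has $D_n \in \Sigma_d$ eventually. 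This simultaneously delivers $[D_n] \in U_d$ eventually (openness of $U_d$ in $W_d$) and continuity of the inverse $U_d \to \Sigma_d$ at $[D_0]$.

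\textbf{Plan for part (b).} For purity I plan to produce, for every $[D]\in W_d$, a $d$-dimensional polytope inside $W_d$ containing $[D]$. By Theorem~\ref{thm:repr_with_below_break}, $[D]$ admits a semibreak representative lying in some closed cell $C = (\overline e_1\times\cdots\times \overline e_d)/\mathfrak{S}_d$ of $\SB_d$. Since $\Phi_q$ is affine linear on each closed edge segment of $\Gamma$, the restriction $S^{(d)}|_C$ is affine linear in suitable local coordinates. The interior of $C$ lies inside $\Sigma_d$, so Lemma~\ref{lem:inj} gives injectivity on that interior, and an affine linear map injective on a nonempty open subset of its domain is injective on the entire domain. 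Hence the image of $C$ is a $d$-dimensional polytope in $\Pic^d(\Gamma)$ containing $[D]$, and $W_d$, being the finite union of such cell images, is a purely $d$-dimensional polyhedral subset of $\Pic^d(\Gamma)$.

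The main obstacle is part (a)'s openness of $U_d$ in $W_d$: since $d<g$ in general, $U_d$ is not open in $\Pic^d(\Gamma)$, so invariance-of-domain arguments do not apply directly. What will make the sequential argument work is the observation that semibreak representatives live in the compact set $\SB_d$, so Theorem~\ref{thm:repr_with_below_break} controls not just density but also convergence of sequences in $W_d$ --- converting a soft topological question about $W_d$ into a concrete question about $\SB_d$.
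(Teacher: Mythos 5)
Your proposal is correct and rests on the same foundations as the paper's proof: you pick $U_d = S^{(d)}(\Sigma_d)$, use Lemma~\ref{lem:inj} to show $(S^{(d)})^{-1}(U_d) = \Sigma_d$, and exploit the compactness of $\SB_d$ together with surjectivity of $\SB_d \to W_d$ from Theorem~\ref{thm:repr_with_below_break}. Where the paper establishes openness of $U_d$ and continuity of the inverse by noting that $\SB_d \to W_d$ is a closed map (by the closed map lemma) and hence a quotient map, you instead run a sequential compactness argument in $\SB_d$; these are equivalent in the metrizable setting at hand, and both hinge on the same two inputs, namely compactness of $\SB_d$ and the fact that $|D| = \{D\}$ for $D \in \Sigma_d$. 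For part (b), the paper deduces pure-dimensionality of $W_d$ by observing that $W_d = \overline{U_d}$ with $U_d \cong \Sigma_d$ a purely $d$-dimensional polyhedral set, while you give a slightly more hands-on cell-by-cell argument (producing for each $[D]$ a $d$-dimensional polyhedral piece of $W_d$ through it by extending the injectivity of the affine cell map from the open face to the closed face); both are valid, and yours does not actually need part (a) as an input. One small caveat: the image of a closed cell $C$ under $S^{(d)}$ need not literally be a polytope in $\Jac(\Gamma)$ (the affine lift to $H_1(\Gamma,\RR)$ is injective, but the projection to the torus could identify boundary points for long edges); however, it is still a $d$-dimensional polyhedral subset, which is all that pure-dimensionality requires.
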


\begin{proof}
(a) Let $U_d=S^{(d)}(\Sigma_d)$. It follows from Lemma \ref{lem:inj} that $(S^{(d)})^{-1}(U_d)=\Sigma_d$.
By Theorem \ref{thm:repr_with_below_break}  the induced map 
\[\SB_d\to W_d\] 
is surjective. It is also a closed map (by the {\em closed map lemma}), because $\SB_d$ is compact and $W_d$ is Hausdorff. In particular, the topology on $ W_d$ coincides with the quotient topology. 

As $\Sigma_d$ is dense in $\SB_d$, it follows from the closedness of the map that $U_d=S^{(d)}(\Sigma_d)$ is dense in $ W_d$. It is a direct consequence of Lemma \ref{lem:inj} that $\Sigma_d=(S^{(d)})^{-1}(U_d)$ and $\Sigma_d\to U_d$ is a bijection. Since $\Sigma_d$ is open in $\SB_d$, this implies that that $U_d$ is open as well. As closedness is local on the target, the induced map $\Sigma_d\to U_d$ is closed again. And as a continuous and closed bijection it must be a homeomorphism. 

For part (b), note that $\Sigma_d$, and hence $U_d$, are purely $d$-dimensional; every component of $\Sigma_d$ can be identified with a $d$-dimensional open polyhedron in $\Gamma^d$ under the quotient map $\Gamma^d\to \Div^d_+(\Gamma)$. It follows immediately that the closure $ W_d$ of $U_d$ is also purely $d$-dimensional.
\end{proof}

\begin{Remark} \label{rmk:tropvsag1}
\begin{itemize}
\item[]
\item[(i)] 
As $W_d$ is the $d$-fold sum of $W_1$, which is easily seen to be purely $1$-dimensional, it follows directly from the subadditivity of dimensions of sums that $W_d$ is at most $d$-dimensional. With the additional ingredient that $W_g=\Pic^g(\Gamma)$ is $g$-dimensional it even follows that the dimension of $W_d$ is equal to $d$ (cf.\ \cite[Proposition 3.6]{LPP}). Note that this argument does \emph{not} immediately imply that $W_d$ is purely $d$-dimensional, as sums of pure-dimensional polyhedral sets are not pure-dimensional in general. However, with some extra care this approach will yield a different proof of the pure-dimensionality, and with some more work even of Theorem \ref{thm:repr_with_below_break}. But unlike our proof, this approach does not yield an algorithm that calculates a semibreak divisor that is linear equivalent to a given effective divisor.

\item[(ii)] Another way to prove the pure-dimensionality of $W_d$ is by tropicalization. It is well-known that there exists a Mumford curve $C$ whose Berkovich analytification has $\Gamma$ as its skeleton. Combining the results \cite[Theorem 1.3]{BR} and \cite[Theorem 6.9]{Gubler} then yields the statement. Of course, this approach is highly non-constructive.
\end{itemize}
\end{Remark}

\begin{Theorem}
\label{thm:generic rank} 
Let $\Gamma$ be a metric graph of genus $g$ and let $d$ be a nonnegative integer. Then there exists an open dense subset $ U_d \subseteq W_d$ of the effective locus such that, for $[D]\in U_d$, we have
\[
r(D) = 
\begin{cases}
d-g &\text{ if } d > g, \\
0 &\text{ if } 0 \leq d \leq g.
\end{cases}
\]
\end{Theorem}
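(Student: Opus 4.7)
The plan is to split into two cases according to whether $0 \leq d \leq g$ or $d > g$, reducing each to results already established.

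For the range $0 \leq d \leq g$, I would take $U_d$ to be the open dense subset of $W_d$ supplied by Theorem~\ref{thm:birational}(a), so that $(S^{(d)})^{-1}(U_d) = \Sigma_d$ and the induced map $\Sigma_d \to U_d$ is a homeomorphism. Any class $[D] \in U_d$ then has a unique representative $D' \in \Sigma_d$, and Lemma~\ref{lem:inj} gives $r(D') = 0$. Since rank depends only on the linear equivalence class, $r(D) = 0$ as required.

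For $d > g$, I would argue via duality, using the tropical Riemann--Roch theorem
\[
r(D) = (d-g+1) + r(K-D),
\]
so it suffices to produce $U_d$ on which $r(K-D) = -1$, equivalently $[K-D] \notin W_{2g-2-d}$. If $d \geq 2g-1$ this is automatic from $\deg(K-D) < 0$, and one may take $U_d = W_d = \Pic^d(\Gamma)$; the equality $W_d = \Pic^d(\Gamma)$ for $d \geq g$ is noted in the text (as a consequence of Riemann--Roch). In the remaining range $g < d \leq 2g-2$, set $e = 2g-2-d$, so $0 \leq e < g$. The effective locus $W_e \subseteq \Pic^e(\Gamma)$ is closed (as the image of a compact set under a continuous map into a Hausdorff space) and, by Theorem~\ref{thm:birational}(b), is purely $e$-dimensional; since $e < g = \dim \Pic^e(\Gamma)$, the subset $W_e$ has empty interior, so its complement $V$ is open and dense. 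Pulling back through the homeomorphism $\kappa\colon \Pic^d(\Gamma) \to \Pic^e(\Gamma)$, $[D] \mapsto [K-D]$, I would take $U_d := \kappa^{-1}(V) \subseteq W_d = \Pic^d(\Gamma)$, which is open and dense.

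Essentially all the genuine content lies in the first case, where the work has already been packaged into Theorem~\ref{thm:birational}(a) and Lemma~\ref{lem:inj}; the second case is formal, amounting only to the observation that the dimension bound combined with closedness forces $W_e$ to be a nowhere dense subset of $\Pic^e(\Gamma)$. I therefore do not anticipate any genuine obstacle beyond assembling these pieces carefully.
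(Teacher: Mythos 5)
Your proposal is correct and follows essentially the same path as the paper: the case $0\leq d\leq g$ uses $U_d=S^{(d)}(\Sigma_d)$ from Theorem~\ref{thm:birational}(a) together with Lemma~\ref{lem:inj}, and the case $d>g$ reduces to showing $[K-D]\notin W_{2g-2-d}$ generically via Riemann--Roch. The only minor difference is that in the range $g<d\leq 2g-2$ you invoke the pure-dimensionality of $W_{2g-2-d}$ from Theorem~\ref{thm:birational}(b), whereas the paper observes that the weaker (and more elementary) bound $\dim W_{2g-2-d}\leq 2g-2-d<g$ already suffices to make its complement dense; both are valid.
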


\begin{proof}
The case $d > g$ is an elementary consequence of the tropical Riemann-Roch Theorem:
\begin{itemize}
\item If $d\geq 2g-1$ we can take 
\[U_d= W_d=\Pic^d(\Gamma)\, .\] 
Since $\deg(K) = 2g-2$, for every degree $d$ divisor $D$ we have $r(K-D)=-1$, hence $r(D)=d-g$ by the tropical Riemann-Roch. 
\item If $g < d \leq 2g-2$ we can take 
\[
U_d= \Pic^d(\Gamma) -  \left(S^{(2g-2)}(K)- W_{2g-2-d} \right) \, .\] 
We claim $U_d$ is a dense open subset of $ W_d  =\Pic^d(\Gamma)$. 
This follows from the fact that $W_{2g-2-d}$ is a closed polyhedral subset of $\Pic^d(\Gamma)$, of dimension {\em at most} $2g-2-d <g$. This certainly follows from Theorem~\ref{thm:birational} (b), but it is more elementary and follows directly from definitions. 

If $[D] \in U_d$ then, by definition of $U_d$, $K-D$ is not equivalent to an effective divisor.  Therefore, $r(K-D)=-1$ and  $r(D)=d-g$ by the tropical Riemann-Roch Theorem.
\end{itemize}

Assume $d\leq g$. In this case, we may take $U_d=S^{(d)}(\Sigma_d)$ which is open and dense in $ W_d$ by Theorem \ref{thm:birational}. 
If $[D] \in U_d$, then $D$ is linear equivalent to some $D'\in \Sigma_d$. It follows from Lemma \ref{lem:inj} that $r(D')=0$, finishing the proof.
\end{proof}

\begin{Remark} \label{rmk:tropvsag2}
Unlike in algebraic geometry, a property that holds generically on $W_d$ does not automatically hold generically for $\Div_+^d(\Gamma)$. This is because the tropical Abel-Jacobi map $S^{(d)}$ may contract facets of $\Div^d_+(\Gamma)$. For example, if $\Gamma$ is a chain of two loops, as depicted in Figure \ref{fig:equivalent}, and $e$ is the {\em bridge} (i.e the edge  connecting the two circles) then all divisors of the form $(p)+(q)$ with $p,q\in e$ are linear equivalent and of rank $1$. Of course, the set of these divisors has nonempty interior and hence there does not exist a dense open subset of $\Div^2_+(\Gamma)$ where the rank is $0$. On the other hand, by Theorem \ref{thm:generic rank}, there exists a dense open subset $U_2 \subseteq W_2$ such that $r(D)=0$ whenever $[D]\in U_2$.
\end{Remark}

\begin{bibdiv}
\begin{biblist}

\bib{ABKS}{article}{
      author={An, Yang},
      author={Baker, Matthew},
      author={Kuperberg, Greg},
      author={Shokrieh, Farbod},
       title={Canonical representatives for divisor classes on tropical curves
  and the matrix-tree theorem},
        date={2014},
        ISSN={2050-5094},
     journal={Forum Math. Sigma},
      volume={2},
       pages={e24, 25},
         url={https://doi-org.proxy.library.cornell.edu/10.1017/fms.2014.25},
      review={\MR{3264262}},
}

\bib{backman}{article}{
      author={Backman, Spencer},
       title={Riemann-{R}och theory for graph orientations},
        date={2017},
        ISSN={0001-8708},
     journal={Adv. Math.},
      volume={309},
       pages={655\ndash 691},
  url={https://doi-org.proxy.library.cornell.edu/10.1016/j.aim.2017.01.005},
      review={\MR{3607288}},
}

\bib{baker}{article}{
      author={Baker, Matthew},
       title={Specialization of linear systems from curves to graphs},
        date={2008},
        ISSN={1937-0652},
     journal={Algebra Number Theory},
      volume={2},
      number={6},
       pages={613\ndash 653},
  url={https://doi-org.proxy.library.cornell.edu/10.2140/ant.2008.2.613},
        note={With an appendix by Brian Conrad},
      review={\MR{2448666}},
}

\bib{Berkovich}{book}{
      author={Berkovich, Vladimir~G.},
       title={Spectral theory and analytic geometry over non-{A}rchimedean
  fields},
      series={Mathematical Surveys and Monographs},
   publisher={American Mathematical Society, Providence, RI},
        date={1990},
      volume={33},
        ISBN={0-8218-1534-2},
      review={\MR{1070709}},
}

\bib{baker2011metric}{article}{
      author={Baker, Matthew},
      author={Faber, Xander},
       title={Metric properties of the tropical {A}bel-{J}acobi map},
        date={2011},
        ISSN={0925-9899},
     journal={J. Algebraic Combin.},
      volume={33},
      number={3},
       pages={349\ndash 381},
         url={http://dx.doi.org/10.1007/s10801-010-0247-3},
      review={\MR{2772537}},
}

\bib{BJ}{incollection}{
      author={Baker, Matthew},
      author={Jensen, David},
       title={Degeneration of linear series from the tropical point of view and
  applications},
        date={2016},
   booktitle={Nonarchimedean and tropical geometry},
      series={Simons Symp.},
   publisher={Springer, [Cham]},
       pages={365\ndash 433},
      review={\MR{3702316}},
}

\bib{BN07}{article}{
      author={Baker, Matthew},
      author={Norine, Serguei},
       title={Riemann-{R}och and {A}bel-{J}acobi theory on a finite graph},
        date={2007},
        ISSN={0001-8708},
     journal={Adv. Math.},
      volume={215},
      number={2},
       pages={766\ndash 788},
  url={https://doi-org.proxy.library.cornell.edu/10.1016/j.aim.2007.04.012},
      review={\MR{2355607}},
}

\bib{BPR}{article}{
      author={Baker, Matthew},
      author={Payne, Sam},
      author={Rabinoff, Joseph},
       title={Nonarchimedean geometry, tropicalization, and metrics on curves},
        date={2016},
        ISSN={2214-2584},
     journal={Algebr. Geom.},
      volume={3},
      number={1},
       pages={63\ndash 105},
         url={https://doi-org.proxy.library.cornell.edu/10.14231/AG-2016-004},
      review={\MR{3455421}},
}

\bib{BR}{article}{
      author={Baker, Matthew},
      author={Rabinoff, Joseph},
       title={The skeleton of the {J}acobian, the {J}acobian of the skeleton,
  and lifting meromorphic functions from tropical to algebraic curves},
        date={2015},
        ISSN={1073-7928},
     journal={Int. Math. Res. Not. IMRN},
      number={16},
       pages={7436\ndash 7472},
         url={https://doi-org.proxy.library.cornell.edu/10.1093/imrn/rnu168},
      review={\MR{3428970}},
}

\bib{BS13}{article}{
      author={Baker, Matthew},
      author={Shokrieh, Farbod},
       title={Chip-firing games, potential theory on graphs, and spanning
  trees},
        date={2013},
        ISSN={0097-3165},
     journal={J. Combin. Theory Ser. A},
      volume={120},
      number={1},
       pages={164\ndash 182},
  url={https://doi-org.proxy.library.cornell.edu/10.1016/j.jcta.2012.07.011},
      review={\MR{2971705}},
}

\bib{CDPR}{article}{
      author={Cools, Filip},
      author={Draisma, Jan},
      author={Payne, Sam},
      author={Robeva, Elina},
       title={A tropical proof of the {B}rill-{N}oether theorem},
        date={2012},
        ISSN={0001-8708},
     journal={Adv. Math.},
      volume={230},
      number={2},
       pages={759\ndash 776},
  url={https://doi-org.proxy.library.cornell.edu/10.1016/j.aim.2012.02.019},
      review={\MR{2914965}},
}

\bib{CJP}{article}{
      author={Cartwright, Dustin},
      author={Jensen, David},
      author={Payne, Sam},
       title={Lifting divisors on a generic chain of loops},
        date={2015},
        ISSN={0008-4395},
     journal={Canad. Math. Bull.},
      volume={58},
      number={2},
       pages={250\ndash 262},
  url={https://doi-org.proxy.library.cornell.edu/10.4153/CMB-2014-050-2},
      review={\MR{3334919}},
}

\bib{GriffHarr}{book}{
      author={Griffiths, Phillip},
      author={Harris, Joseph},
       title={Principles of algebraic geometry},
      series={Wiley Classics Library},
   publisher={John Wiley \& Sons, Inc., New York},
        date={1994},
        ISBN={0-471-05059-8},
         url={https://doi-org.proxy.library.cornell.edu/10.1002/9781118032527},
        note={Reprint of the 1978 original},
      review={\MR{1288523}},
}

\bib{GK}{article}{
      author={Gathmann, Andreas},
      author={Kerber, Michael},
       title={A {R}iemann-{R}och theorem in tropical geometry},
        date={2008},
        ISSN={0025-5874},
     journal={Math. Z.},
      volume={259},
      number={1},
       pages={217\ndash 230},
  url={https://doi-org.proxy.library.cornell.edu/10.1007/s00209-007-0222-4},
      review={\MR{2377750}},
}

\bib{Gubler}{article}{
      author={Gubler, Walter},
       title={Tropical varieties for non-{A}rchimedean analytic spaces},
        date={2007},
        ISSN={0020-9910},
     journal={Invent. Math.},
      volume={169},
      number={2},
       pages={321\ndash 376},
  url={https://doi-org.proxy.library.cornell.edu/10.1007/s00222-007-0048-z},
      review={\MR{2318559}},
}

\bib{HKN}{article}{
      author={Hladk\'y, Jan},
      author={Kr\'al, Daniel},
      author={Norine, Serguei},
       title={Rank of divisors on tropical curves},
        date={2013},
        ISSN={0097-3165},
     journal={J. Combin. Theory Ser. A},
      volume={120},
      number={7},
       pages={1521\ndash 1538},
  url={https://doi-org.proxy.library.cornell.edu/10.1016/j.jcta.2013.05.002},
      review={\MR{3092681}},
}

\bib{IFF_submod}{article}{
      author={Iwata, Satoru},
      author={Fleischer, Lisa},
      author={Fujishige, Satoru},
       title={A combinatorial strongly polynomial algorithm for minimizing
  submodular functions},
        date={2001},
        ISSN={0004-5411},
     journal={J. ACM},
      volume={48},
      number={4},
       pages={761\ndash 777},
         url={https://doi-org.proxy.library.cornell.edu/10.1145/502090.502096},
      review={\MR{2144929}},
}

\bib{JP1}{article}{
      author={Jensen, David},
      author={Payne, Sam},
       title={Tropical independence {I}: {S}hapes of divisors and a proof of
  the {G}ieseker-{P}etri theorem},
        date={2014},
        ISSN={1937-0652},
     journal={Algebra Number Theory},
      volume={8},
      number={9},
       pages={2043\ndash 2066},
  url={https://doi-org.proxy.library.cornell.edu/10.2140/ant.2014.8.2043},
      review={\MR{3294386}},
}

\bib{JP2}{article}{
      author={Jensen, David},
      author={Payne, Sam},
       title={Tropical independence {II}: {T}he maximal rank conjecture for
  quadrics},
        date={2016},
        ISSN={1937-0652},
     journal={Algebra Number Theory},
      volume={10},
      number={8},
       pages={1601\ndash 1640},
  url={https://doi-org.proxy.library.cornell.edu/10.2140/ant.2016.10.1601},
      review={\MR{3556794}},
}

\bib{hiperTutte}{article}{
      author={K\'alm\'an, Tam\'as},
       title={A version of {T}utte's polynomial for hypergraphs},
        date={2013},
        ISSN={0001-8708},
     journal={Adv. Math.},
      volume={244},
       pages={823\ndash 873},
  url={https://doi-org.proxy.library.cornell.edu/10.1016/j.aim.2013.06.001},
      review={\MR{3077890}},
}

\bib{Lee}{book}{
      author={Lee, John~M.},
       title={Introduction to smooth manifolds},
     edition={Second},
      series={Graduate Texts in Mathematics},
   publisher={Springer, New York},
        date={2013},
      volume={218},
        ISBN={978-1-4419-9981-8},
      review={\MR{2954043}},
}

\bib{LPP}{article}{
      author={Lim, Chang~Mou},
      author={Payne, Sam},
      author={Potashnik, Natasha},
       title={A note on {B}rill-{N}oether theory and rank-determining sets for
  metric graphs},
        date={2012},
        ISSN={1073-7928},
     journal={Int. Math. Res. Not. IMRN},
      number={23},
       pages={5484\ndash 5504},
         url={https://doi-org.proxy.library.cornell.edu/10.1093/imrn/rnr233},
      review={\MR{2999150}},
}

\bib{luoye}{article}{
      author={Luo, Ye},
       title={Rank-determining sets of metric graphs},
        date={2011},
        ISSN={0097-3165},
     journal={J. Combin. Theory Ser. A},
      volume={118},
      number={6},
       pages={1775\ndash 1793},
  url={https://doi-org.proxy.library.cornell.edu/10.1016/j.jcta.2011.03.002},
      review={\MR{2793609}},
}

\bib{MZ}{incollection}{
      author={Mikhalkin, Grigory},
      author={Zharkov, Ilia},
       title={Tropical curves, their {J}acobians and theta functions},
        date={2008},
   booktitle={Curves and abelian varieties},
      series={Contemp. Math.},
      volume={465},
   publisher={Amer. Math. Soc., Providence, RI},
       pages={203\ndash 230},
  url={https://doi-org.proxy.library.cornell.edu/10.1090/conm/465/09104},
      review={\MR{2457739}},
}

\bib{Schrijver_submod}{article}{
      author={Schrijver, Alexander},
       title={A combinatorial algorithm minimizing submodular functions in
  strongly polynomial time},
        date={2000},
        ISSN={0095-8956},
     journal={J. Combin. Theory Ser. B},
      volume={80},
      number={2},
       pages={346\ndash 355},
  url={https://doi-org.proxy.library.cornell.edu/10.1006/jctb.2000.1989},
      review={\MR{1794698}},
}

\bib{Schrijver_book}{book}{
      author={Schrijver, Alexander},
       title={Combinatorial optimization. {P}olyhedra and efficiency. {V}ol.
  {B}},
      series={Algorithms and Combinatorics},
   publisher={Springer-Verlag, Berlin},
        date={2003},
      volume={24},
        ISBN={3-540-44389-4},
        note={Matroids, trees, stable sets, Chapters 39--69},
      review={\MR{1956925}},
}

\end{biblist}
\end{bibdiv}


\end{document}